\documentclass[11pt,reqno]{amsart}
\usepackage[utf8]{inputenc}
\usepackage{amsmath}
\usepackage{amsfonts}
\usepackage{amssymb}
\usepackage{amsthm}
\usepackage{hyperref}
\usepackage{multicol}
\usepackage{pictexwd, dcpic}
\usepackage{fancyhdr}
\usepackage{mathrsfs}
\usepackage{latexsym,mathtools}
\usepackage{bm, enumerate}
\usepackage{graphicx}
\usepackage{color,soul}
\usepackage{xcolor}
\usepackage{hyperref}
\usepackage{dsfont}
\usepackage{todonotes}
\usepackage{tikz-cd}
\makeatletter
\@namedef{subjclassname@2020}{%
	\textup{2020} Mathematics Subject Classification}
\makeatother

\newcommand{\beq}{\begin{eqnarray}}
	\newcommand{\eeq}{\end{eqnarray}}
\newcommand{\beqn}{\begin{eqnarray*}}
	\newcommand{\eeqn}{\end{eqnarray*}}
\newcommand{\rar}{\rightarrow}

\reversemarginpar  % These 2 commands work very fine

\numberwithin{equation}{section}

\newtheorem{theorem}{\bf Theorem}[section]
\newtheorem{proposition}[theorem]{\bf Proposition}%[section]
\newtheorem{corollary}[theorem]{\bf Corollary}%[section]
\newtheorem{lemma}[theorem]{\bf Lemma}%[section]

\theoremstyle{remark}
\newtheorem{example}[theorem]{\bf Example}
\newcommand*{\Ge}{\geqslant}

\newcommand*{\inp}[2]{\langle{#1},\,{#2} \rangle}
\newcommand*{\Le}{\leqslant}

\theoremstyle{definition}

\theoremstyle{remark}

\hypersetup{
	colorlinks,
	citecolor=blue,
	linkcolor=blue,
	urlcolor=blue}

\title{}
\begin{document}

% \title[Norm formula and Sarason's containment result for finite rank de Branges-Rovnyak space]{Norm formula and Sarason's containment result for finite rank de Branges-Rovnyak space}
% \title[A norm formula for finite rank de Branges Rovnyak spaces and its applications]{A norm formula for finite rank de Branges Rovnyak spaces and its applications}
\title[Unbounded Toeplitz operators and finite rank de Branges-Rovnyak spaces]{Unbounded Toeplitz operators and finite rank de Branges-Rovnyak spaces}

\author[S. Ghara]{Soumitra Ghara}
\address{Department of Mathematics\\
Indian Institute of Technology  Kharagpur, Midnapore-721302, India}
   \email{soumitra@maths.iitkgp.ac.in\\ ghara90@gmail.com}

\author[R. Reza]{MD Ramiz Reza}
\address{School of Mathematics, Indian Institute of Science Education and Research Thiruvananthapuram, Kerala-695551, India}
\email{ramiz.md@gmail.com, ramiz@iisertvm.ac.in}

\author[C. K. Sahu]{Chaman Kumar Sahu}
\address{Department of Mathematics, Indian Institute of Technology Bombay,
Powai, Mumbai, 400076, India}
\email{sahuchaman9@gmail.com, chamanks@math.iitb.ac.in}

\subjclass[2020]{}
\keywords{Unbounded Toeplitz operator, De Branges-Rovnyak space, Schur function,  Mate}
\subjclass[2020]{Primary 30H45, Secondary 46E22, 47B35}

\begin{abstract}
Motivated by the recent developments of de Branges-Rovnyak spaces, we investigate the function theoretic aspects of finite rank de Branges-Rovnyak spaces $H(B)$ generated by row-valued Schur functions $B$. We provide a  generalization of Sarason's fundamental work by characterizing finite rank $H(B)$-spaces as the domain of the adjoint of the Toeplitz operators $T_\varphi^*$ with symbol $\varphi = BA^{-1}$, where $A$ is an matrix-valued outer function satisfying $A^*A+B^*B = I$ a.e. on the unit circle. We derive a norm formula for functions in $H(B)$-space and provide a concrete realization of this norm in terms of the Taylor coefficients of the function and the symbol $\varphi$. As an application, we characterize  all symbols $B$ for which $H^\infty \subseteq H(B)$ in terms of the boundary behavior of $I-BB^*$, thereby extending Sarason's criterion for the classical de Branges–Rovnyak spaces.
% we characterize 
% the continuous embedding $H^\infty \subseteq H(B)$ via the boundary behavior of $I-BB^*$, thereby extending Sarason's criterion for the classical de Branges–Rovnyak spaces.
% provide a characterization for the containment of the algebra of bounded analytic functions on the unit disc inside $H(B)$ in terms of the boundary behavior of $I-BB^*$, generalizing Sarason's criterion for the classical de Branges–Rovnyak spaces.
\end{abstract}

\maketitle

\section{Introduction}

Let $\mathbb N$ and $\mathbb C$ denote the sets of positive integers and complex numbers, respectively. Let $\mathbb{D} = \{z \in \mathbb{C} : |z| < 1\}$ and $\mathbb{T} = \{z \in \mathbb{C} : |z| = 1\}$ denote the open unit disc and the unit circle, respectively. For complex separable Hilbert spaces $\mathcal X$ and $\mathcal Y$, let $\mathcal L(\mathcal X, \mathcal Y)$ denote the space of bounded linear operators from $\mathcal X$ into $\mathcal Y$. The \emph{Schur class} $\mathcal S(\mathcal X, \mathcal Y)$ consists of all holomorphic functions $f : \mathbb D \to \mathcal L(\mathcal X, \mathcal Y)$ such that
$$\sup_{z \in \mathbb D} \|f(z)\|_{\mathcal L(\mathcal X, \mathcal Y)} \leqslant  1.$$

% Let $\mathbb N$ and $\mathbb C$ denote the set of all positive integers and complex numbers, respectively. The open unit disc $\{z \in \mathbb C : |z| < 1\}$ is denoted by $\mathbb D$. The unit circle $\{z \in \mathbb C: |z| = 1\}$ is denoted by $\mathbb T$. For complex separable Hilbert spaces $\mathcal X$ and $\mathcal Y$, let $\mathcal L(\mathcal X, \mathcal Y)$ denote the algebra of bounded linear operators from $\mathcal X$ to $\mathcal Y$, and the {\it Schur class} $\mathcal{S}(\mathcal{X}, \mathcal{Y})$ is defined as the set of all operator-valued holomorphic functions mapping  $\mathbb D$ to the closed unit ball of bounded linear operators, that is,
% $$\mathcal S(\mathcal X, \mathcal Y) = \Big\{ f: \mathbb{D} \to \mathcal {L}(\mathcal {X}, \mathcal {Y}) \mid f \text{ is holomorphic and } \sup_{z \in \mathbb {D}} \|f(z)\| \le 1 \Big\}.$$ 

For $B \in \mathcal S(\mathcal X, \mathcal Y)$, the associated \emph{de Branges--Rovnyak space} $H(B)$ is the reproducing kernel Hilbert space with the $\mathcal L(\mathcal Y, \mathcal Y)$-valued kernel
  
\beqn
K_B(z,\lambda) = \frac{I-B(z)B(\lambda)^*}{1-z\overline{\lambda}},\qquad z, \lambda \in \mathbb D.
\eeqn
De Branges-Rovnyak space $H(B)$ can equivalently be realized as the range space of $(I-T_BT_B^*)^{1/2}$ equipped with the range norm, where $T_B$ is the Toeplitz operator induced by the symbols  $B \in \mathcal S(\mathcal X, \mathcal Y),$ see \cite{Ros-Rov}. In case, $\mathcal X= \mathcal Y = \mathbb C$, we refer to $H(B)$ as the classical de Branges-Rovnyak space and it is denoted by $H(b)$. An important case arises when $B \equiv 0$, in which case $H(B)$ coincides with the Hardy space $H^2_{\mathcal Y}$ of $\mathcal Y$-valued analytic functions on $\mathbb D$ with square-summable Taylor coefficients. When $\mathcal Y= \mathbb C$, we write $H^2$ for $H^2_{\mathbb C}$. 

% de Branges–Rovnyak spaces have emerged as a rich framework in modern operator theory and function theory, with deep connections to model theory of contractions, interpolation problems, and spaces of analytic functions.
Originating in the work of de Branges and Rovnyak and subsequently developed in a systematic way by Donald Sarason, de Branges–Rovnyak spaces provide a flexible rich framework that unifies function-theoretic and operator-theoretic techniques, see \cite{Bran-Rovn, Sa1,Sa2,Sa3} and references therein. In recent years, it has found its renewed importance in areas such as control theory, structured operator models, and multivariable operator theory, particularly through their role as reproducing kernel Hilbert spaces associated with contractive analytic functions, see for example \cite{Chu, Co-Ra, CGR, CGR-2022, FM-1, FM-2,LGR, Sa4} and references therein.

%  The reproducing kernel of $H^2_{\mathcal Y}$ is
% \[
% K_{\mathbf 0}(z,\lambda) = \frac{1}{1 - z\overline{\lambda}} I_{\mathcal Y}, \qquad z, \lambda \in \mathbb D.
% \]

In this article, we study the \emph{finite rank de Branges--Rovnyak spaces} $H(B),$ which are generated by the Schur functions $B = (b_1, b_2, \ldots, b_n) \in \mathcal S(\mathbb C^n,\mathbb C)$ such that $\{b_1,b_2,\ldots,b_n\}$ is a linearly independent set, see \cite[Definition~1.1]{A-M}. The kernel then takes the form
\[
K_B(z,\lambda) = \frac{1-\sum_{j=1}^n b_j(z)\overline{b_j(\lambda)}}{1-z\overline{\lambda}}, \quad z, \lambda \in \mathbb D.
\]
While several aspects of the scalar theory extend formally to this setting, new challenges arise due to the operator-valued nature of the symbol, see \cite{A-M}. In particular, obtaining concrete realizations of $H(B)$, deriving computable norm formulas, and understanding embeddings of classical function spaces into $H(B)$ require special attention. 
Under the condition $\log(1-BB^*) \in L^1(\mathbb T)$, it is well known that $H(B)$ is invariant under the shift operator $M_z,$ see \cite{A-M}. In this case, there exist outer functions $a\in  \mathcal S(\mathbb C,\mathbb C)$ and $A\in  \mathcal S(\mathbb C^n,\mathbb C^n)$ satisfying
\begin{equation}\label{scalar outer a}
\sum_{j=1}^n |b_j(\zeta)|^2 + |a(\zeta)|^2 = 1 \quad \text{a.e. } \zeta \in \mathbb T,
\end{equation}
and
\begin{equation}\label{unique-A}
B(\zeta)^*B(\zeta) + A(\zeta)^*A(\zeta) = I \quad \text{a.e. } \zeta \in \mathbb T.
\end{equation}
Also, under the aforementioned assumption,  
\begin{align}\label{f^+ toeplitz expression}
  \text{a function $f\in H^2$ is contained in $H(B)$ $\iff$ $\exists$  $f^+\in H^2_{\mathbb C^n}$ such that $T_B^*f+T_{A}^*f^+=0$.}
\end{align}
Moreover, in this case, $f^+$ is unique and 
\beq
\label{norm-formula}
\|f\|_{H(B)}^2 = \|f\|_{H^2}^2 + \|f^+\|_{H^2_{\mathbb C^n}}^2, \quad f \in H(B),
\eeq
see \cite[Proposition~5.1 \& Theorem~5.2]{A-M} and a more general result is provided in \cite[Theorem~2.2(i)]{CGL}.
Since $A$ is outer, it is invertible on $\mathbb D$, see \cite[pg. 22]{Nik}, and we set $\varphi = BA^{-1}$.  In the scalar case, Sarason identifies $H(b)$ as the domain of the adjoint of a (possibly unbounded) Toeplitz operator $T_{\varphi}$. Our first main result below extends this to finite rank de Branges-Rovnyak spaces $H(B),$ thereby generalizing \cite[Proposition~5.4]{Sa1}.

% $A^{-1}= \frac{adj(A)}{det A}: \mathbb D \rar \mathcal L(\mathbb C^n, \mathbb C^n)$ is a holomorphic function in the Smirnov class $N^{+}(\mathcal L(\mathbb C^n, \mathbb C^n)).$

 \vspace{5pt}
\textbf{Theorem~A.}(Theorem~\ref{domain-debranges} below) Let $B \in \mathcal S(\mathbb C^n, \mathbb C)$ be such that $\log(1-BB^*) \in L^1(\mathbb T).$ Let $A: \mathbb D \rar \mathcal L(\mathbb C^n, \mathbb C^n)$ be an outer function satisfying \eqref{unique-A}, and 
let $\varphi = BA^{-1}: \mathbb D \rar \mathcal L(\mathbb C^n, \mathbb C).$ Then $T_{\varphi}$ is a densely defined operator in $H^2_{\mathbb C^n}$, and  the domain of $T_\varphi^*$ is $H(B)$. Moreover, $T_\varphi^*f = -f^+$ for $f \in H(B)$, and consequently, 
$$
\|f\|_{H(B)}^2 = \|f\|_{H^2}^2 + \|T_\varphi^*f\|_{H^2_{\mathbb C^n}}^2, \quad f \in H(B).
$$

% \vspace{10pt}
% . 

Note that Theorem~A establishes a norm formula in terms of an operator-theoretic identity whose utility is significantly enhanced when the action of $T_\varphi^*$ can be explicitly computed. The next theorem provides exactly this for functions holomorphic in a neighborhood of $\overline{\mathbb D}$, yielding a computable formula for the $H(B)$ norm in terms of its Taylor coefficients.  This generalizes the classical case \cite[Theorem~4.1]{CGR} for finite rank de Branges-Rovnyak spaces. 

 \vspace{5pt}

\textbf{Theorem~B.} (Theorem~\ref{norm-formula-thm} below) Let $B \in \mathcal S(\mathbb C^n, \mathbb C)$ be such that $\log(1-BB^*) \in L^1(\mathbb T).$ Let $A: \mathbb D \rar \mathcal L(\mathbb C^n, \mathbb C^n)$ be an outer function satisfying \eqref{unique-A}.
Let $BA^{-1}: \mathbb D \rar \mathcal L(\mathbb C^n, \mathbb C)$ admit the series expansion $\sum_{j=0}^\infty c_jz^j$. Suppose $f$ is holomorphic in a neighborhood of $\overline{\mathbb D}$, say, $f(z) = \sum_{j=0}^\infty \hat{f}(j) z^j$. Then the series $\sum_{j=0}^\infty \hat{f}(j+k){c_j^*}$ converges absolutely for each integer $k \Ge 0$, and 
\beqn
% \label{norm-formula-nbd}
\|f\|_{H(B)}^2 = \sum_{k=0}^\infty |\hat{f}(k)|^2 + \sum_{k=0}^\infty \Big\|\sum_{j=0}^{\infty} c_j^* \hat{f}(j+k)\Big\|^2.
\eeqn

As a consequence of Theorem~B, we obtain explicit formulas for the norms of monomials, the S\"{z}ego kernel functions, and various other functions in $H(B)$. In the classical setting, such computations play a central role in understanding when a de Branges--Rovnyak space $H(b)$ coincides with weighted Dirichlet-type spaces
% , as well as in determining conditions under which their norms are equal or equivalent
(see, e.g., \cite{Co-Ra, CGR, Sa3}).

A common approach to analyzing the structure of Hilbert spaces of analytic functions is to study their embeddings into well-understood function spaces. In this direction, a classical result of Sarason \cite[Theorem~1]{Sa2} characterizes when $H(b)$ contains $H^\infty$, the algebra of bounded analytic functions on $\mathbb D$. Our next result extends this characterization to the finite rank setting.

\vspace{5pt}
\textbf{Theorem~C.}(Theorem~\ref{bounded-contained} below)
    Let  $B  \in \mathcal S(\mathbb C^n, \mathbb C)$ be such that $\log(1-BB^*) \in L^1(\mathbb T).$ Let $a$ and $A$ be outer functions satisfying \eqref{scalar outer a} and \eqref{unique-A}, respectively. Then, the following are equivalent:
    \begin{itemize}
        \item [(i)] $H^\infty \subseteq H(B)$,
        \item [(ii)] $\sup_{n \Ge 0} \|z^n\|_{H(B)} < \infty$,
        \item [(iii)] $BA^{-1} \in H^2_{\mathcal L(\mathbb C^n, \mathbb C)}$,
        \item[(iv)]$\frac{B}{a}\in H^2_{\mathcal L(\mathbb C^n, \mathbb C)}$,
        \item [(v)] $(1 - BB^*)^{-1} \in L^{1}(\mathbb T)$.
        \end{itemize}

% Let  $B \in \mathcal S(\mathbb C^n, \mathbb C),$ and $B(z) = (b_1(z), b_2(z),\ldots, b_n(z)),~ z\in \mathbb D.$ Therefore $B: \mathbb D \rar \mathcal L(\mathbb C^n, \mathbb C)$ is an analytic function  satisfies $\sum_{j=1}^n|b_j(z)|^2 \Le 1,~ z \in \mathbb D.$ 

% We denote by $H(B),$ the reproducing kernel Hilbert space associated to the kernel function $K_B$ defined by 
% \beqn
% K_B(z, w) = \frac{1-\sum_{i=1}^{n} b_i(z)\overline{b_i(w)}}{1-z\overline{w}}, \quad z, w \in \mathbb D.
% \eeqn
The content of this paper is organized as follows. In the next section, we show that the finite rank de Branges-Rovnyak space is the domain of adjoint of (possibly unbounded) Toeplitz operators (Theorem~\ref{domain-debranges}). As a result, we prove a norm formula of functions of finite rank de Branges-Rovnyak spaces (Theorem~\ref{norm-formula-thm}). In section~\ref{3}, we obtain a norm formula of several functions in $H(B)$ including monomials and the S\"{z}ego kernel functions, illustrating the results of Section~\ref{2} (Corollary~\ref{monomial-norm}). We also present an example of a Schur class function $B$ with explicit computation of matrix-valued outer function and give norm formula of several functions in the associated de Branges-Rovnyak space (Example~\ref{explicit-ex1}). In Section~\ref{4}, we characterize $H(B)$-spaces which contain the Banach algebra $H^\infty$ of bounded analytic functions on $\mathbb D$ (Theorem~\ref{bounded-contained}), and end the paper with some concluding remarks.

\section{$H(B)$-spaces and unbounded Toeplitz operators}\label{2}
 Let $\mathcal X$ and $\mathcal Y$ be complex separable Hilbert spaces. We write $L^2(\mathbb T,\mathcal Y)$ for the Hilbert space of (equivalence classes of) square-integrable $\mathcal Y$-valued functions on $\mathbb T$. For the case $\mathcal Y = \mathbb C$, we denote this space by $L^2(\mathbb T)$. Recall that $H^2_{\mathcal Y}$ is the $\mathcal Y$-valued Hardy space given by 
 \[
H^2_{\mathcal Y} := \left\{ \sum_{n=0}^{\infty} a_n z^n : a_n \in \mathcal Y,\ \sum_{n=0}^{\infty} \|a_n\|_{\mathcal Y}^2 < \infty \right\}.
\]
 We can view $H^2_{\mathcal Y}$ as a closed subspace of $L^2(\mathbb T, \mathcal Y)$ in the usual manner by considering the boundary values of the analytic functions in $H^2_{\mathcal Y}$. Let $P : L^2(\mathbb T,\mathcal Y) \to H^2_{\mathcal Y}$ be the orthogonal projection. For a measurable function $\psi : \mathbb T \to \mathcal L(\mathcal X,\mathcal Y)$, the (operator-valued) Toeplitz operator $T_\psi$ is formally defined on $H^2_{\mathcal X}$ by
\[
T_\psi f = P(\psi f).
\]
In case, if $\psi \in L^\infty(\mathbb T,\mathcal L(\mathcal X,\mathcal Y))$, which is the space of $\mathcal L(\mathcal X,\mathcal Y)$-valued essentially bounded measurable functions on $\mathbb T$, then $T_{\psi}$ defines a bounded operator from $H^2_{\mathcal X}$ to $H^2_{\mathcal Y}$.  In general, if $\psi$ is not essentially bounded, $T_\psi$ need not be bounded, and is understood as a densely defined operator with domain
\[
\mathcal{D}(T_\psi) = \{ f \in H^2_{\mathcal X} : \psi f \in L^2(\mathbb T,\mathcal Y) \}.
\]
Such operators are referred to as (possibly unbounded) Toeplitz operators. They arise naturally in the study of de Branges-Rovnyak spaces and play a key role in the operator-theoretic description of these spaces. 

In this section, our first main result is below, showing the relationship between $H(B)$-spaces and unbounded Toeplitz operators with vector-valued symbols. 
 
\begin{theorem}\label{domain-debranges}
Let $B \in \mathcal S(\mathbb C^n, \mathbb C)$ be such that $\log(1-BB^*) \in L^1(\mathbb T).$ Let $A: \mathbb D \rar \mathcal L(\mathbb C^n, \mathbb C^n)$ be an outer function satisfying \eqref{unique-A}, and 
let $\varphi = BA^{-1}: \mathbb D \rar \mathcal L(\mathbb C^n, \mathbb C).$ Then $T_{\varphi}$ is a densely defined operator in $H^2_{\mathbb C^n}$, and  the domain of $T_\varphi^*$ is $H(B)$. Moreover, $T_\varphi^*f = -f^+$ for $f \in H(B)$, and consequently, 
$$
\|f\|_{H(B)}^2 = \|f\|_{H^2}^2 + \|T_\varphi^*f\|_{H^2_{\mathbb C^n}}^2, \quad f \in H(B).
$$
%    Let $B: \mathbb D  \rar \mathcal L(\mathbb C^n, \mathbb C)$ be a contractive holomorphic function $($that is, $B = (b_1, b_2, \ldots, b_n)$ is a $\mathbb C^n$-valued holomorphic function on $\mathbb D$ with $\sum_{j=1}^n|b_j(z)|^2 \Le 1,~z \in \mathbb D)$ such that $\log(1-BB^*) \in L^1(\mathbb T).$ Let $A: \mathbb D \rar \mathcal L(\mathbb C^n, \mathbb C^n)$ be the unique outer function such that 
% $B(\zeta)^*B(\zeta) + A(\zeta)^*A(\zeta) = I,$  a.e. $\zeta \in \mathbb T$.
% Let $\varphi = BA^{-1}: \mathbb D \rar \mathcal L(\mathbb C^n, \mathbb C).$ Then $$\mathcal D(T_\varphi^*) =H(B),$$ where $\mathcal D(T)$ denotes the domain of $T.$
\end{theorem}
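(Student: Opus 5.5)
We follow Sarason's scalar argument, using the characterization \eqref{f^+ toeplitz expression} and the norm formula \eqref{norm-formula} from \cite{A-M}.

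\emph{Dense domain.} Since $A$ is outer in $H^\infty_{\mathcal L(\mathbb C^n,\mathbb C^n)}$ and \eqref{unique-A} makes $A$ nonsingular a.e. on $\mathbb T$, the range $AH^2_{\mathbb C^n}$ is dense in $H^2_{\mathbb C^n}$. This is the operator-valued Beurling theorem for outer functions. For $g=Ah$ with $h\in H^2_{\mathbb C^n}$ we have $\varphi g = BA^{-1}Ah = Bh\in H^2$ a.e. on $\mathbb T$. Hence $AH^2_{\mathbb C^n}\subseteq \mathcal D(T_\varphi)$, with $T_\varphi(Ah)=T_Bh$, and $T_\varphi$ is densely defined. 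Here we interpret $\varphi$ via its boundary values, using $A^{-1}\in N^+$, so that $\varphi$ is a measurable function with $\varphi A=B$ a.e.

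\emph{Key step: $AH^2_{\mathbb C^n}$ is a core for $T_\varphi$.} We need this in the sense that for $f\in H^2$,
\[
f\in\mathcal D(T_\varphi^*)\iff \exists\, u\in H^2_{\mathbb C^n}:\ \langle T_Bh,f\rangle=\langle Ah,u\rangle\ \ \forall h\in H^2_{\mathbb C^n},
\]
and then $u=T_\varphi^*f$. The direction ($\Rightarrow$) is immediate. For ($\Leftarrow$), take $g\in\mathcal D(T_\varphi)$, so $\varphi g\in L^2$, and approximate it. Let $A_r$ denote suitable outer multipliers, or use $g_k=A\,(A^{-1}g)_k$ with truncations of the form $A\,\chi_k A^{-1}g$, where $\chi_k$ is a bounded outer function with $\chi_kA^{-1}\in H^\infty$. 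Such $\chi_k$ exist, e.g. $\chi_k=\det(A)\,e_k$ with $e_k\to1$ boundedly, since $A^{-1}=\mathrm{adj}(A)/\det A$. Then $\chi_kg\in AH^2_{\mathbb C^n}$, and $\chi_k g\to g$ and $\varphi\chi_k g=\chi_k\varphi g\to\varphi g$ in $L^2$ by dominated convergence. Producing bounded outer $\chi_k\to1$ a.e. with $|\chi_k|\le1$ and $\chi_k/\det A$ bounded is the main technical obstacle. We would take $\chi_k=\exp(-\text{Herglotz}(\max(-\log|\det A|-k,0)))$ times a normalized factor, adapting Sarason's scalar construction to $\det A$.

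\emph{Identification with $H(B)$.} Given the core property, the condition reads $\langle h, T_B^*f\rangle=\langle h,T_A^*u\rangle$ for all $h$, i.e. $T_B^*f-T_A^*u=0$. By \eqref{f^+ toeplitz expression}, this holds for some $u$ iff $f\in H(B)$, with $u=-f^+$ by uniqueness of $f^+$. Thus $\mathcal D(T_\varphi^*)=H(B)$ and $T_\varphi^*f=-f^+$. The norm identity then follows directly from \eqref{norm-formula}.
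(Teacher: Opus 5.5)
Your argument is correct in outline but takes a longer route than the paper at its one substantive step. You only show that $A(H^2_{\mathbb C^n})$ is a core for $T_\varphi$: you approximate $g\in\mathcal D(T_\varphi)$ by $\chi_k g$ using Sarason-type outer truncations $\chi_k$, and then read off $T_\varphi^*$ from the dense part $\{(Ah,Bh)\}$ of the graph.

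The paper instead proves the exact equality $\mathcal D(T_\varphi)=A(H^2_{\mathbb C^n})$ (Lemma~\ref{domain-toeplitz}). For $g\in\mathcal D(T_\varphi)$, \eqref{unique-A} gives $(A^{-1}g)^*(A^{-1}g)=g^*g+(\varphi g)^*(\varphi g)$ a.e.\ on $\mathbb T$, so $A^{-1}g\in L^2(\mathbb T,\mathbb C^n)$. Since $A^{-1}g=\operatorname{adj}(A)g/\det A$ with $\det A$ outer, Smirnov's theorem gives $A^{-1}g\in H^2_{\mathbb C^n}$, i.e.\ $g=A(A^{-1}g)$. Thus $\mathcal G(T_\varphi)=\{(Ah,Bh):h\in H^2_{\mathbb C^n}\}$ exactly, $\mathcal G(T_\varphi^*)$ drops out of $\ker(T_A^*\ \ T_B^*)$, and the ``main technical obstacle'' you flag never arises.

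What your route buys is that the core step never uses \eqref{unique-A}. It shows $A(H^2_{\mathbb C^n})$ is a core for $T_{BA^{-1}}$ whenever $B$ and $A$ are bounded and $\det A$ is outer. The paper's route exploits \eqref{unique-A}, so it is shorter and describes $\mathcal D(T_\varphi)$ itself.

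If you keep your route, two points need repair.

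\emph{The choice of $\chi_k$.} The suggestion $\chi_k=\det(A)\,e_k$ with $e_k\to 1$ boundedly is wrong as stated, since then $\chi_k\to\det A$, not $1$. What works is your Herglotz construction: the outer $\chi_k$ with $|\chi_k|=\min(1,e^k|\det A|)$ on $\mathbb T$. For it, $\|\chi_k\|_\infty\le 1$ and, by Smirnov, $\|\chi_k/\det A\|_\infty\le e^k$; no normalizing factor is needed.

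\emph{The convergence $\chi_k\to1$.} This construction gives $|\chi_k|\to 1$ a.e. But $\chi_k\to 1$ a.e.\ also needs the conjugate functions of $w_k=\max(-\log|\det A|-k,0)$ to tend to $0$, and Kolmogorov's weak-type estimate only gives this in measure. So pass to a subsequence before invoking dominated convergence. Alternatively, $\chi_k(z)\to 1$ on $\mathbb D$ with $\|\chi_k\|_\infty\le 1$ gives $\chi_k\to 1$ weak-$*$ in $L^\infty$. Hence $\chi_k F\to F$ weakly in $L^2$ for every $F\in L^2$, which already suffices for your adjoint computation.
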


We follow the approach of \cite[Proposition~5.4]{Sa1} in proving Theorem~\ref{domain-debranges}. We begin with the following lemma.

\begin{lemma} \label{domain-toeplitz}
    Let $B \in \mathcal S(\mathbb C^n, \mathbb C)$ be such that $\log(1-BB^*) \in L^1(\mathbb T).$ Let $A: \mathbb D \rar \mathcal L(\mathbb C^n, \mathbb C^n)$ be an outer function satisfying \eqref{unique-A}, and 
let $\varphi = BA^{-1}: \mathbb D \rar \mathcal L(\mathbb C^n, \mathbb C).$ Then $$\mathcal D(T_\varphi) = A(H^2_{\mathbb C^n}).$$ 
\end{lemma}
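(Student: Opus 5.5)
We prove the two inclusions separately, following Sarason's scalar argument. Here $\mathcal D(T_\varphi)=\{g\in H^2_{\mathbb C^n}: \varphi g\in L^2(\mathbb T)\}$, with $\varphi=BA^{-1}$ understood through its boundary values. Since $A$ is outer, $\det A$ is a scalar outer function, hence nonzero a.e.\ on $\mathbb T$. Therefore $A(\zeta)$ is invertible a.e., and $\varphi(\zeta)=B(\zeta)A(\zeta)^{-1}$ a.e.

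\emph{Inclusion $A(H^2_{\mathbb C^n})\subseteq \mathcal D(T_\varphi)$.} Let $g=Ah$ with $h\in H^2_{\mathbb C^n}$. Because $A$ is bounded by \eqref{unique-A}, we have $g\in H^2_{\mathbb C^n}$. Moreover $\varphi g=BA^{-1}Ah=Bh$ a.e.\ on $\mathbb T$. Since $B$ is bounded, $Bh\in H^2\subseteq L^2(\mathbb T)$, so $g\in\mathcal D(T_\varphi)$.

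\emph{Inclusion $\mathcal D(T_\varphi)\subseteq A(H^2_{\mathbb C^n})$.} Let $g\in H^2_{\mathbb C^n}$ with $\varphi g\in L^2(\mathbb T)$, and set $h=A^{-1}g$. This is holomorphic on $\mathbb D$, and we must show $h\in H^2_{\mathbb C^n}$.
\begin{enumerate}
\item \emph{Boundary values lie in $L^2$.} By \eqref{unique-A}, for a.e.\ $\zeta$ and every $v\in\mathbb C^n$ we have $\|v\|^2=|B(\zeta)v|^2+\|A(\zeta)v\|^2$. Taking $v=A(\zeta)^{-1}g(\zeta)$ gives
\[
\|h(\zeta)\|^2=|\varphi(\zeta)g(\zeta)|^2+\|g(\zeta)\|^2 .
\]
Since $g\in H^2_{\mathbb C^n}$ and $\varphi g\in L^2(\mathbb T)$, this shows $h\in L^2(\mathbb T,\mathbb C^n)$ on the circle.
\item \emph{$h$ lies in the Smirnov class.} Write $A^{-1}=\operatorname{adj}(A)/\det A$. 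The entries of $\operatorname{adj}(A)$ are bounded analytic functions, and $1/\det A$ is the reciprocal of a bounded outer function, hence lies in $N^+$. So each coordinate of $h$ is a sum of products of $H^\infty$, $H^2$ and $N^+$ functions, and therefore lies in $N^+$.
\item \emph{Smirnov's theorem.} A function in $N^+$ whose boundary values are in $L^2$ belongs to $H^2$. Applied coordinatewise, this gives $h\in H^2_{\mathbb C^n}$ and hence $g=Ah\in A(H^2_{\mathbb C^n})$.
\end{enumerate}

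The main obstacle is step 2 and the justification that the boundary function $h(\zeta)$ agrees with the radial limit of $A^{-1}g$. One must use that $\det A$ is outer (the determinant of a square outer matrix function is outer, which is standard, e.g.\ \cite{Nik}) to place $A^{-1}$ in $N^+$. Once that is done, Smirnov's theorem completes the argument.

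Density of $\mathcal D(T_\varphi)$, needed later, follows from this description: $A(H^2_{\mathbb C^n})=T_A H^2_{\mathbb C^n}$ is dense because $A$ is outer.
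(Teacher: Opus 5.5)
Your proof is correct and follows the paper's argument essentially step for step. You get the easy inclusion from $\varphi Ah = Bh$, then use the pointwise identity from \eqref{unique-A} to show $A^{-1}g\in L^2(\mathbb T,\mathbb C^n)$, and finally write $A^{-1}=\operatorname{adj}(A)/\det(A)$ with $\det(A)$ outer and apply Smirnov's theorem coordinatewise, exactly as the paper does via \cite[Corollary~I.3]{Nik}.
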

\begin{proof}
By \eqref{unique-A}, $A(\zeta)^*A(\zeta) \Le I~\mbox{a.e.}~ \zeta\in \mathbb T$, which implies that each entry of $A$ is in $H^\infty$. Thus, 
$$
Ah \in H^2_{\mathbb C^n},  \quad h \in H^2_{\mathbb C^n}.
$$
Also, since each $b_j \in H^\infty,$ we obtain 
    \beqn
   \varphi Ah  = Bh = \sum_{j=1}^n b_jh_j \in H^2, \quad h=(h_j)_{j=1}^n \in H^2_{\mathbb C^n}.
    \eeqn
    Hence, $Ah \in \mathcal D(T_\varphi).$
    
    To see the reverse inclusion $\mathcal D(T_\varphi) \subseteq A(H^2_{\mathbb C^n}),$ let $h \in \mathcal D(T_\varphi).$ 
   Then,
    \beqn
        (BA^{-1}h)^* (BA^{-1}h) = (A^{-1}h)^*B^*BA^{-1}h 
        &\overset{\eqref{unique-A}}=& (A^{-1}h)^*(I-A^*A)A^{-1}h \\
         &=& (A^{-1}h)^* A^{-1}h - h^*h \quad \text{a.e. on $\mathbb T$}.
    \eeqn
    Since $h \in \mathcal D(T_\varphi)$, it follows that both $(BA^{-1}h)^* (BA^{-1}h)$ and $h^*h$ belong to $L^1(\mathbb T),$ and hence $(A^{-1}h)^*A^{-1}h \in L^1(\mathbb T).$ Equivalently, we have 
    \beq\label{Ainverseh}
    A^{-1}h \in L^2(\mathbb T, \mathbb C^n).
    \eeq
    Moreover, since each entry of $A$ is in $H^\infty$, it follows that each entry of the adjugate $\operatorname{adj(A)}$ of $A$ is in $H^\infty$.  Therefore, $$A^{-1}h= \frac{\operatorname{adj}(A)h}{\det (A)} = \frac{1}{\det(A)}(\tilde{h}_j)_{j=1}^n,$$ where each $\tilde{h}_j$ is in $H^2$.
    % In other words, if \textcolor{red}{we write}
    % \beqn
    % A^{-1}h = \frac{1}{\det(A)}(\tilde{h}_j)_{j=1}^d,
    % \eeqn
    % then it follows that $\frac{\tilde{h}_j}{\det(A)} \in L^2(\mathbb T, \mathbb C)$ for all $j=1,2, \ldots, d.$ 
    By \eqref{Ainverseh}, $\frac{\tilde{h}_j}{\det(A)} \in L^2(\mathbb T)$, and since $A$ is outer, by \cite[pg. 125]{Hel}, $\det(A)$ is outer, and hence by \cite[Corollary~I.3]{Nik}, $\frac{\widetilde{h}_j}{\det(A)} \in H^2$ for all $j=1,2, \ldots, n.$ Therefore, $A^{-1}h \in H^2_{\mathbb C^n}$, which implies that
    \beqn
 h = A(A^{-1}h) \in A(H^2_{\mathbb C^n}).
    \eeqn
    This completes the proof.
    \end{proof}

% We also need the following result that can be easily seen by combining Proposition~5.2 and Theorem~5.1 of \cite{A-M} (see also ). 

 We are now ready to present the proof of Theorem~\ref{domain-debranges}.
\begin{proof}[Proof of Theorem~\ref{domain-debranges}]
To see the inclusion $\mathcal D(T_{\varphi}^*)\subseteq H(B)$, let $f\in \mathcal D(T_{\varphi}^*)$. Then, it follows that $T_A^*T_{\varphi}^*f=T_{B}^*f$, and hence by \eqref{f^+ toeplitz expression}, $f \in H(B)$. 
% proves $\mathcal D(T_{\varphi}^*)\subseteq H(B)$.

Conversely, by Lemma~\ref{domain-toeplitz}, it follows that $T_{\varphi}$ is densely defined and the graph $\mathcal G(T_\varphi)$ of $T_\varphi$ is given by
    \beqn
\mathcal G(T_\varphi) = \{(Ah, Bh)  : h \in H^2_{\mathbb C^n}\} = \mbox{range} \begin{pmatrix}
      T_A \\
      T_B
  \end{pmatrix} .
    \eeqn
This implies that 
\beqn
\mathcal G(T_\varphi)^\perp = \text{ker}(T_A^* \,\,\,\, T_B^*)  = \{(g, f) \in H^2_{\mathbb C^n} \oplus H^2: T_A^*g + T_B^*f = 0\}.
\eeqn
Hence,
\begin{align}
\mathcal G(T_\varphi^*)&= \{(f, g) \in H^2 \oplus H^2_{\mathbb C^n} : (-g) \oplus f \in \mathcal G(T_\varphi)^\perp\}\notag\\
&= \{(f, g) \in H^2 \oplus H^2_{\mathbb C^n} :  T_A^*g=T_B^*f\}.\label{eqngraph}
%&=& \{(f, f^+): f \in H(B)\}.
\end{align}
Now, let $f\in H(B)$. Then, again by \eqref{f^+ toeplitz expression}, there exists $f^+\in H^2_{\mathbb C^n}$ such that $T_{B}^*f+T_A^*f^+=0.$ By \eqref{eqngraph}, $(f,-f^+)\in \mathcal G(T_{\varphi}^*)$, and hence, $f\in \mathcal{D}(T_{\varphi}^*)$ and $T_{\varphi}^*f = -f^+$. Now, by using \eqref{norm-formula}, the remaining norm formula is immediate. This completes the proof. 
\end{proof}

Note that if $B \in \mathcal S(\mathbb C^n, \mathbb C)$ satisfies $\log(1-BB^*) \in L^1(\mathbb T),$ then by using \cite[Lemma 5.3]{A-M}, it follows that $H(B)$ contains all  functions analytic in a neighborhood of $\overline{\mathbb D}.$ The following result provides an explicit norm formula for functions analytic in a neighborhood of $\overline{\mathbb D}.$ This extends the  classical case recorded in \cite[Theorem~4.1]{CGR} (see also, \cite{Sa1}).

\begin{theorem}
\label{norm-formula-thm}
    Let $B \in \mathcal S(\mathbb C^n, \mathbb C)$ be such that $\log(1-BB^*) \in L^1(\mathbb T).$ Let $A: \mathbb D \rar \mathcal L(\mathbb C^n, \mathbb C^n)$ be an outer function given as in \eqref{unique-A}.
Let $BA^{-1}: \mathbb D \rar \mathcal L(\mathbb C^n, \mathbb C)$ admit the series expansion $\sum_{j=0}^\infty c_jz^j$. Suppose  $f$ is a holomorphic function in a neighborhood of $\overline{\mathbb D}$. If $f(z) = \sum_{j=0}^\infty \hat{f}(j) z^j$, then the series $\sum_{j=0}^\infty \hat{f}(j+k){c_j^*}$ converges absolutely for each integer $k \Ge 0$, and 
\beq
\label{norm-formula-nbd}
\|f\|_{H(B)}^2 = \sum_{k=0}^\infty |\hat{f}(k)|^2 + \sum_{k=0}^\infty \Big\|\sum_{j=0}^{\infty} c_j^* \hat{f}(j+k)\Big\|^2.
\eeq
\end{theorem}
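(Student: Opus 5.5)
By Theorem~\ref{domain-debranges}, $f\in H(B)$ and $\|f\|_{H(B)}^2=\|f\|_{H^2}^2+\|T_\varphi^*f\|^2$, where $\varphi=BA^{-1}$. So the task reduces to showing two things: the Taylor coefficients of $T_\varphi^*f$ are the vectors $\sum_{j\ge0}c_j^*\hat f(j+k)$, $k\ge0$, and these series converge absolutely. The first term of \eqref{norm-formula-nbd} is just Parseval for $\|f\|_{H^2}^2$.

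\emph{Absolute convergence.} Since $f$ is holomorphic on a disc of radius $R>1$, we have $|\hat f(m)|\le C r^{-m}$ for some $1<r<R$. It therefore suffices to show that the $c_j$ grow subexponentially, i.e.\ $\limsup\|c_j\|^{1/j}\le 1$. This holds because $\varphi=BA^{-1}=B\,\operatorname{adj}(A)/\det A$ is holomorphic on $\mathbb D$ (as $A$ is outer, hence invertible on $\mathbb D$), so its Taylor series has radius of convergence at least $1$. Hence $\sum_j\|c_j\|\,|\hat f(j+k)|\le C r^{-k}\sum_j\|c_j\|r^{-j}<\infty$.

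\emph{Identifying $T_\varphi^*f$.} I would compute $\langle T_\varphi^*f, e_i z^k\rangle$ for $e_i$ a standard basis vector of $\mathbb C^n$ and $k\ge0$. The difficulty is that $e_iz^k$ need not lie in $\mathcal D(T_\varphi)$. By Lemma~\ref{domain-toeplitz}, however, vectors of the form $Ah$ do, and $T_\varphi(Ah)=Bh$. So I would use the graph description \eqref{eqngraph} together with $T_\varphi^*f=-f^+$, and test instead against polynomials. The cleanest route is to approximate. For $0<\rho<1$, put $\varphi_\rho(z)=\varphi(\rho z)$, which is bounded, and use the claim that $T_{\varphi}^*f$ is the $H^2$-limit (or at least the weak limit) of $T_{\varphi_\rho}^*f$.

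\emph{Alternative via pairing.} A second approach avoids approximation. For $g=Ah$ with $h$ a polynomial vector,
\[\langle T_\varphi^*f, Ah\rangle=\langle f, Bh\rangle=\int_{\mathbb T}\overline{(BA^{-1})(\zeta)\,(A h)(\zeta)}\,f(\zeta)\,dm.\]
Because $f$ extends holomorphically past $\mathbb T$, this integral can be evaluated on the circle $|z|=\rho<1$ and then let $\rho\to1$. On $|z|=\rho$, expanding $\varphi$ in its Taylor series gives
\[\Big\langle \sum_k \Big(\sum_j c_j^*\hat f(j+k)\Big)z^k,\; Ah\Big\rangle.\]
The coefficient of $\rho$-dependence is handled by the uniform estimate from the previous paragraph, which gives dominated convergence. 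This shows that $T_\varphi^*f-F$ is orthogonal to $A\mathcal P$, where $F:=\sum_k(\sum_jc_j^*\hat f(j+k))z^k\in H^2_{\mathbb C^n}$ (indeed its coefficients decay like $r^{-k}$). Since $A$ is outer, $A\mathcal P$ is dense in $H^2_{\mathbb C^n}$, so $T_\varphi^*f=F$. The density statement (Beurling–Lax for matrix outer functions, i.e.\ $\overline{AH^2_{\mathbb C^n}}=H^2_{\mathbb C^n}$) would be cited.

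\emph{Main obstacle.} The delicate step is justifying the limit $\rho\to1$ in the pairing. On $|\zeta|=\rho$, $(Ah)(\zeta)$ is fine, but $\varphi(\rho\zeta)(Ah)(\rho\zeta)=(Bh)(\rho\zeta)$, and this is bounded uniformly in $\rho$. So I would write the integral as $\int\overline{(Bh)(\rho\zeta)}f(\zeta/\rho)\,dm$, which tends to $\langle f,Bh\rangle$. Expanding $(Bh)=\varphi\cdot(Ah)$ into coefficients at radius $\rho$ then requires absolute summability of a double series, and this follows from $\|c_j\|\rho^j$ being bounded together with the geometric decay of $\hat f$.
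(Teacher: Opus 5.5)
Your argument is correct, but it takes a different route from the paper. The paper first proves \eqref{norm-formula-nbd} for polynomials: it writes $\varphi=\sum_{j\le m}c_jz^j+z^{m+1}\psi_m$ and checks that $T^*_{z^{m+1}\psi_m}$ kills polynomials of degree $\le m$. It then reaches general $f$ by showing that the Taylor partial sums $S_m(f)$ are Cauchy in $H(B)$, comparing the monomial norms $1+\sum_{i\le k}\|c_i\|^2$ with the geometric decay of $\hat f$, and letting $m\to\infty$. Membership $f\in H(B)$ comes out of that limit. You instead identify $T_\varphi^*f$ in one step, by duality against the graph $\{(Ah,Bh)\}$ of $T_\varphi$ from Lemma~\ref{domain-toeplitz}. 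This avoids any limiting process in $H(B)$, and it still specializes to the finite polynomial formula that the paper uses later.

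There are two things to tighten.

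\emph{Membership of $f$ in $H(B)$.} Theorem~\ref{domain-debranges} only gives $\mathcal D(T_\varphi^*)=H(B)$; it does not say $f\in H(B)$. You do not need to cite anything for this, because your coefficient identity holds for every $h\in H^2_{\mathbb C^n}$, not just for polynomials. Indeed, $\widehat{Bh}(m)=\sum_{j\le m}c_j\widehat{Ah}(m-j)$ is the Cauchy product of $\varphi$ and $Ah$ on $\mathbb D$. So by Parseval
\[
\langle f,Bh\rangle=\sum_{j,l\ge0}\langle c_j^*\hat f(j+l),\widehat{Ah}(l)\rangle=\langle F,Ah\rangle .
\]
The rearrangement is justified because, with $|\hat f(m)|\le Cr^{-m}$ and $r>1$ as in your first step,
\[
\sum_{j,l}\|c_j\|\,|\hat f(j+l)|\,\|\widehat{Ah}(l)\|\le C\|Ah\|_{H^2}\sum_j\|c_j\|r^{-j}\sum_l r^{-l}<\infty .
\]
Since $\mathcal D(T_\varphi)=A(H^2_{\mathbb C^n})$ and $T_\varphi(Ah)=Bh$, this says $\langle T_\varphi g,f\rangle=\langle g,F\rangle$ for all $g\in\mathcal D(T_\varphi)$. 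Hence $f\in\mathcal D(T_\varphi^*)=H(B)$ and $T_\varphi^*f=F$ at the same time. You then need neither the density of $A\mathcal P$ nor the $\rho\to1$ limit; the radial integrals you write down are in fact independent of $\rho$.

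\emph{The $\varphi_\rho$ approach.} It rests on the unproved claim $T_{\varphi_\rho}^*f\to T_\varphi^*f$. Since the pairing argument is self-contained, drop it.
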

\begin{proof}
    We first verify the formula \eqref{norm-formula-nbd} for polynomials. Let $f(z) = \sum_{j=0}^m \hat{f}(j)z^j$. Note that $\varphi(z) =  \sum_{j=0}^m c_j z^j + z^{m+1}\psi_m$ for some  $\mathcal L(\mathbb C^n, \mathbb C)$-valued analytic function $\psi_m$ on $\mathbb D$. By Lemma \ref{domain-toeplitz}, it follows that  $T_{z^{m+1}\psi_m}$ is a densely defined operator in $H^2_{\mathbb C^n}.$
Let $g \in \mathcal D(T_{z^{m+1}\psi_m})$. Then, $z^{m+1}\psi_m g \in H^2$, which implies that $\psi_m g \in H^2$, and therefore,
    \beqn
    \inp{f}{T_{z^{m+1}\psi_m}(g)}_{H^2} = \inp{f}{z^{m+1}\psi_m g}_{H^2} = \inp{T_{z^{m+1}}^*f}{\psi_m g}_{H^2} =  0.
    \eeqn
% Here the last equality follows since \textcolor{red}{for any  $\mathbb C^n$-valued analytic function $h$ on $\mathbb D$, $zh\in H^2_{\mathbb C^n}$ implies $h\in H^2_{\mathbb C^n}$}, and $f$ is a polynomial of degree at most $m$. 
Thus, it follows that $f\in \mathcal D(T_{z^{m+1}\psi_m}^*)$ and $T_{z^{m+1}\psi_m}^*(f) = 0$, and hence
    \beqn
  T_{\varphi}^* (f) = \sum_{j=0}^m T_{c_jz^j}^*(f) = \sum_{j=0}^m c_j^* T_{z^j}^*(f) = \sum_{j=0}^m c_j^* \sum_{k=0}^{m-j} \hat{f}(j+k) z^k = \sum_{k=0}^m\sum_{j=0}^{m-k} c_j^*  \hat{f}(j+k) z^k.
    \eeqn
 Now, by Theorem~\ref{domain-debranges},
    \beq
    \label{formula-poly}
\|f\|_{H(B)}^2 = \|f\|_{H^2}^2 + \|T_{\varphi}^* (f)\|_{H^2_{\mathbb C^n}}^2 = \sum_{k=0}^m |\hat{f}(k)|^2 + \sum_{k=0}^m \Big\|\sum_{j=0}^{m-k} c_j^*  \hat{f}(j+k)\Big\|^2,
\eeq
completing the formula of norm for polynomials. 
% Note that, from \eqref{formula-poly}, we obtain
% \begin{equation}\label{norm of monomials}
%   \|z^m\|_{H(B)}^2=   1 + \sum_{i=0}^{m} \|c_{m-i}^*\|^2=1 + \sum_{i=0}^{m} \|c_{i}^*\|^2,~m\Ge 0.
% \end{equation}

To prove the general case, let $S_m(f)(z) := \sum_{j=0}^m \hat{f}(j) z^j$. Then, by \eqref{formula-poly},
\beq
\label{norm-taylorpoly}
\|S_m(f)\|_{H(B)}^2 = \sum_{k=0}^m |\hat{f}(k)|^2 + \sum_{k=0}^m \Big\|\sum_{j=0}^{m-k} c_j^* \hat{f}(j+k)\Big\|^2, \quad m \Ge 0.
\eeq
By the assumption on $f$, there exists $R > 1$ such that $f$ is holomorphic on the closed disk of radius $R$, and hence, for some constant $d_1>0$, we have 
\beq \label{wj norm}
|\hat{f}(j)| \Le d_1 R^{-j}, \quad j \Ge 0. 
\eeq
Moreover, since $\varphi$ is holomorphic on $\mathbb D$, for any $r \in (1, R)$, there exists $d_2 > 0$ (depending on $r$) such that 
\beq\label{cj norm}
\|c_j\| \Le d_2 r^j, \quad  j \Ge 0.
\eeq
By \eqref{wj norm} and \eqref{cj norm}, we obtain that $\sum_{j=0}^\infty \|c_j^* \hat{f}(j+k)\| \Le \frac{d_1d_2}{R^{k-1}(R-r)}$ for each integer $k \Ge 0$. Hence,
\beq
\label{secod-part-limit}
\lim_{m \rar \infty} \sum_{k=0}^m \Big\|\sum_{j=0}^{m-k} c_j^* \hat{f}(j+k)\Big\|^2 = \sum_{k=0}^\infty \Big\|\sum_{j=0}^{\infty} c_j^* \hat{f}(j+k)\Big\|^2 < \infty.
\eeq
Furthermore, since $1 < r < R$, by \eqref{formula-poly}, \eqref{wj norm} and \eqref{cj norm}, there exists $D>0$ such that 
\beqn
\|\hat{f}(k)z^k\|_{H(B)} = |\hat{f}(k)| \Big(1 + \sum_{i=0}^k \|c_i\|^2 \Big)^{\frac{1}{2}} &\Le& \frac{d_1 \max\{1, d_2\}}{\sqrt{1 - r^2}} \Big(\frac{r}{R}\Big)^k \sqrt{2 r^{-2k} - r^{-2(k-1)} - r^2}\\
&\Le& D \Big(\frac{r}{R}\Big)^k, \quad k \Ge 1. 
\eeqn
This implies that $\{S_m(f)\}_{m \Ge 0}$ is a cauchy sequence in $H(B)$. Also, since norm convergence in $H(B)$ implies pointwise convergence, it follows that $S_m(f)$ converges to $f$ in $H(B)$ as $m \rar \infty$, and hence $\|S_m(f)\|_{H(B)} \rar \|f\|_{H(B)}$ as $m \rar \infty$. Finally, by letting $m \rar \infty$ in \eqref{norm-taylorpoly} and using \eqref{secod-part-limit}, we obtain \eqref{norm-formula-nbd}. This completes the proof. 
\end{proof}

The following result computes the norm of monomials in finite rank $H(B)$-spaces, which is an immediate consequence of \eqref{formula-poly}.

% It is worth noting that we compute the norm of polynomials given in \eqref{formula-poly}. In particular, we obtain the following corollary that computes 
\begin{corollary} 
\label{monomial-formula-lemma-state}
Let  $B \in \mathcal S(\mathbb C^n, \mathbb C)$ be such that $\log(1-BB^*) \in L^1(\mathbb T).$ Suppose $A: \mathbb D \rar \mathcal L(\mathbb C^n, \mathbb C^n)$ is an outer function satisfying \eqref{unique-A}. Let $\varphi = BA^{-1}: \mathbb D \rar \mathcal L(\mathbb C^n, \mathbb C)$ admit the series expansion $\sum_{j=0}^\infty c_jz^j$. 
Then, $z^m \in H(B)$ for all integers $m \Ge 0$, and moreover, 
\beq
\label{monomial-formula-lemma}
\|z^m\|^2_{H(B)} = 1 + \sum_{j=0}^m c_{j} c_{j}^*, \quad m \Ge 0.
\eeq
\end{corollary}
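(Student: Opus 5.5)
The plan is to specialize the polynomial norm formula \eqref{formula-poly} from the proof of Theorem~\ref{norm-formula-thm} to the monomial $f(z)=z^m$. That formula was established for every polynomial and rests on Theorem~\ref{domain-debranges}.

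First, membership. Every polynomial is holomorphic in a neighborhood of $\overline{\mathbb D}$, so by \cite[Lemma 5.3]{A-M} (as noted before Theorem~\ref{norm-formula-thm}) we have $z^m\in H(B)$. Alternatively, the proof of \eqref{formula-poly} already shows directly that $z^m\in\mathcal D(T_\varphi^*)$, which equals $H(B)$ by Theorem~\ref{domain-debranges}. That argument writes $\varphi=\sum_{j=0}^m c_jz^j+z^{m+1}\psi_m$ and observes that $T^*_{z^{m+1}\psi_m}z^m=0$.

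Second, the computation. For $f=z^m$ the Taylor coefficients are $\hat f(i)=\delta_{im}$. Substituting into \eqref{formula-poly}, the first sum equals $1$. In the inner sum $\sum_{j=0}^{m-k}c_j^*\hat f(j+k)$, only the index $j=m-k$ survives, and it lies in range for each $0\le k\le m$. So the inner sum equals $c_{m-k}^*\in\mathbb C^n$. Hence
\[
\|z^m\|_{H(B)}^2=1+\sum_{k=0}^m\|c_{m-k}^*\|^2=1+\sum_{j=0}^m\|c_j^*\|^2 .
\]
Equivalently, $T_\varphi^*z^m=\sum_{k=0}^m c_{m-k}^*z^k$.

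Finally, each $c_j\in\mathcal L(\mathbb C^n,\mathbb C)$ is a row vector, so $c_j^*$ is a column vector. Therefore $\|c_j^*\|^2=\langle c_j^*,c_j^*\rangle=c_jc_j^*$, a nonnegative scalar, which gives \eqref{monomial-formula-lemma}.

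No step here is a real obstacle. The only care needed is in the reindexing $j=m-k$ and in identifying $\|c_j^*\|^2$ with the $1\times1$ matrix $c_jc_j^*$.
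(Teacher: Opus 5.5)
Your proposal is correct and matches the paper's argument: the paper obtains this corollary as an immediate specialization of \eqref{formula-poly} to $f=z^m$, with membership coming from the polynomial case of Theorem~\ref{norm-formula-thm} (equivalently, Theorem~\ref{domain-debranges} or the remark preceding Theorem~\ref{norm-formula-thm}). Your reindexing $j=m-k$ and the identification $\|c_j^*\|^2=c_jc_j^*$ are exactly the two remaining steps needed.
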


The S\"{z}ego kernel, the reproducing kernel for the scalar valued Hardy space $H^2,$ is given by 
\beq\label{kappa}
\kappa_\lambda(z): = K_{\bf 0}(z,\lambda) = \frac{1}{1 - z\overline{\lambda}}, \quad \lambda, z \in \mathbb D.
\eeq
It is known from \cite[Theorem~5.5]{A-M} that the analytic polynomials are dense in $H(B)$. Since $\kappa_\lambda ~(\lambda \in \mathbb D)$ are contained within $H(B)$-spaces, it is natural to expect a corresponding result regarding denseness. We verify this in the result below, extending the classical case \cite[Corollary~23.26]{FM-2}.
\begin{corollary}\label{kappa-dense}
    Let  $B \in \mathcal S(\mathbb C^n, \mathbb C)$ be such that $\log(1-BB^*) \in L^1(\mathbb T).$ Let $\kappa_\lambda~ (\lambda \in \mathbb D)$ be the function defined as in \eqref{kappa}. Then, the linear span of $\{\kappa_\lambda : \lambda \in \mathbb D\}$ is a dense subspace of $H(B)$.
\end{corollary}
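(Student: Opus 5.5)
Since polynomials are dense in $H(B)$ by \cite[Theorem~5.5]{A-M}, it is enough to show that every $f\in H(B)$ orthogonal (in $H(B)$) to all $\kappa_\lambda$, $\lambda\in\mathbb D$, is identically zero. The idea is to rewrite $\inp{f}{\kappa_\lambda}_{H(B)}$ in terms of the Hardy space inner product using the norm formula of Theorem~\ref{domain-debranges}. Polarizing that formula gives
\[
\inp{f}{g}_{H(B)} = \inp{f}{g}_{H^2} + \inp{f^+}{g^+}_{H^2_{\mathbb C^n}}, \qquad f,g\in H(B).
\]
For $g=\kappa_\lambda$ we compute $\kappa_\lambda^+$ directly from \eqref{f^+ toeplitz expression}. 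The standard identities $T_B^*\kappa_\lambda = B(\lambda)^*\kappa_\lambda$ and $T_A^*\kappa_\lambda=A(\lambda)^*\kappa_\lambda$ hold because $B$ and $A$ are bounded analytic. The vector $A(\lambda)$ is invertible since $A$ is outer. Hence the choice
\[
\kappa_\lambda^+ := -\big(A(\lambda)^*\big)^{-1}B(\lambda)^*\,\kappa_\lambda = -\varphi(\lambda)^*\kappa_\lambda \in H^2_{\mathbb C^n}
\]
satisfies $T_B^*\kappa_\lambda + T_A^*\kappa_\lambda^+=0$. By uniqueness this is $\kappa_\lambda^+$, and in particular $\kappa_\lambda\in H(B)$.

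Using the reproducing property of $\kappa_\lambda$ in $H^2$ and $H^2_{\mathbb C^n}$, we then obtain
\[
\inp{f}{\kappa_\lambda}_{H(B)} = f(\lambda) - \inp{f^+(\lambda)}{\varphi(\lambda)^*}_{\mathbb C^n} = f(\lambda) - \varphi(\lambda) f^+(\lambda)
= f(\lambda) - B(\lambda)A(\lambda)^{-1}f^+(\lambda).
\]
So if $f\perp\kappa_\lambda$ for all $\lambda$, we get $A f = A A^{-1}\cdots$; more usefully, $f = \varphi f^+$ on $\mathbb D$, i.e. $f=BA^{-1}f^+$. Set $h:=A^{-1}f^+$, an analytic $\mathbb C^n$-valued function on $\mathbb D$ with $Ah=f^+$ and $Bh=f$.

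The main step, and the one I expect to be the obstacle, is to show that $h\in H^2_{\mathbb C^n}$, mirroring the argument of Lemma~\ref{domain-toeplitz}. On $\mathbb T$ we have, by \eqref{unique-A},
\[
h^*h = h^*(A^*A+B^*B)h = |f^+|^2+|f|^2 \in L^1(\mathbb T).
\]
This requires boundary values of $h$, which exist because $h=\operatorname{adj}(A)f^+/\det A$ is a quotient of $H^2$ functions by an outer function. Hence $h\in L^2$, and by Smirnov's theorem \cite[Corollary~I.3]{Nik}, as in the proof of Lemma~\ref{domain-toeplitz}, we get $h\in H^2_{\mathbb C^n}$.

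Then $f=T_Bh$ and $f^+=T_Ah$. Plugging these into the defining relation gives
\[
0=T_B^*f+T_A^*f^+ = (T_B^*T_B+T_A^*T_A)h.
\]
Since $T_B^*T_B+T_A^*T_A=T_{B^*B+A^*A}=I$ (both are analytic Toeplitz products), we conclude $h=0$. Therefore $f=Bh=0$, which proves density.
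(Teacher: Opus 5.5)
Your proposal is correct and follows essentially the same route as the paper: you compute $\kappa_\lambda^+=-\varphi(\lambda)^*\kappa_\lambda$, deduce $f=BA^{-1}f^+$ from orthogonality, show $h=A^{-1}f^+\in H^2_{\mathbb C^n}$ via \eqref{unique-A} and Smirnov's theorem, and then force $h=0$ using $T_B^*f+T_A^*f^+=0$. The only differences are cosmetic: the paper obtains $h\in L^2$ from the boundary identity $B^*f+A^*f^+=A^{-1}f^+$ rather than from $\|h\|^2=|f|^2+\|f^+\|^2$, and your appeal to density of polynomials is unnecessary, since a subspace is dense exactly when its orthogonal complement is trivial.
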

\begin{proof}
For any $h \in H^2_{\mathbb C^n}$, $\lambda \in \mathbb D$ and $v \in \mathbb C^n$, by the reproducing property,
\beqn
\inp{T_A^*(\kappa_\lambda v)}{h}_{H^2_{\mathbb C^n}} = \inp{\kappa_\lambda v}{Ah} = \inp{v}{A(\lambda)h(\lambda)} &=& \inp{A(\lambda)^*v}{h(\lambda)}\\
&=& \inp{\kappa_\lambda (A(\lambda)^*v)}{h}\\
&=& \inp{A(\lambda)^*(\kappa_\lambda v)}{h},
\eeqn
giving that $T_A^*(\kappa_\lambda v)= A(\lambda)^*(\kappa_\lambda v)$. 
Thus, we have 
    \beqn
    T_B^* \kappa_\lambda = B(\lambda)^*\kappa_\lambda = A(\lambda)^*(A(\lambda)^*)^{-1}B(\lambda)^*\kappa_\lambda = T_A^*((BA^{-1})^*(\lambda)\kappa_\lambda),
\eeqn
and hence by \eqref{f^+ toeplitz expression}, $\kappa_\lambda^+ = -(BA^{-1})^*(\lambda)\kappa_\lambda$. Consequently, for any $f \in H(B)$, again by the reproducing property,
    \beqn
\inp{f}{\kappa_\lambda}_{H(B)} = \inp{f}{\kappa_\lambda}_{H^2} + \inp{f^+}{\kappa_\lambda^+}_{H^2_{\mathbb C^n}} &=& f(\lambda) -\inp{f^+}{(BA^{-1})^*(\lambda)\kappa_\lambda}_{H^2_{\mathbb C^n}}\\
 &=& f(\lambda) - (BA^{-1})(\lambda)f^+(\lambda)\\
 &=& f(\lambda) - B(\lambda)A(\lambda)^{-1}f^+(\lambda).
    \eeqn
 Therefore, if $f \perp \text{span}\{\kappa_\lambda  : \lambda \in \mathbb D\}$, then 
    \begin{equation}\label{f+ and f}
            (f - BA^{-1}f^+)(\lambda) = 0, \quad \lambda \in \mathbb D,
    \end{equation}
    and hence $B^*f - B^*BA^{-1}f^+ \equiv 0$ on $\mathbb T$. By \eqref{unique-A}, it now follows that 
    \beq \label{fina-iden}
    B^*f +A^*f^+ = A^{-1}f^+ \quad  \text{on}~~\mathbb T.
    \eeq
   This gives $A^{-1}f^+  \in L^2(\mathbb T, \mathbb C^n).$ 
     Since $A^{-1} = \frac{adj(A)}{det(A)}$ and each entry of $adj(A)$ is in $H^\infty$, by \cite[Corollary~I.3]{Nik}, it follows that each entry of $A^{-1}f^+$ is in $H^2$. This implies that $A^{-1}f^+ \in H^2_{\mathbb C^n}$, which together with \eqref{fina-iden} gives that $B^*f +A^*f^+ \in H^2_{\mathbb C^n}$. Coupling this with \eqref{f^+ toeplitz expression} yields that $A^{-1}f^+ = 0$. Hence, by \eqref{f+ and f}, $f =0$. This completes the proof. 
\end{proof}

\section{Some applications}\label{3}

In this section, we compute the norm of several special functions of $H(B)$, illustrating Theorems~\ref{domain-debranges} \& \ref{norm-formula-thm} from Section~\ref{2}. Let $\{e_i: 1\Le i \Le  n\}$ denote the standard basis of $\mathbb C^n.$
We denote by $L$  the backward shift operator on $H^2$ given by $$(Lf)(z)= \frac{f(z)-f(0)}{z}.$$ 

We begin with an application of Theorem~\ref{domain-debranges}. For the scalar case $n = 1$, the norm of the first formula below is obtained in \cite[pg.~32]{Sa4}, and the last two formulas are obtained in \cite[Corollary~23.9]{FM-2}. 

\begin{corollary}\label{symbolss-norm}
    Let  $B = (b_1,\ldots, b_n)\in \mathcal S(\mathbb C^n, \mathbb C)$ be such that $\log(1-BB^*) \in L^1(\mathbb T).$ Suppose $A: \mathbb D \rar \mathcal L(\mathbb C^n, \mathbb C^n)$ is an outer function satisfying \eqref{unique-A}. 
    % Let $\varphi = BA^{-1}: \mathbb D \rar \mathcal L(\mathbb C^n, \mathbb C)$ admit the series expansion $\sum_{j=0}^\infty c_jz^j$. 
    Let $\kappa_\lambda ~(\lambda \in \mathbb D)$ be the function defined as in \eqref{kappa}. Then, $b_i\kappa_\lambda\in H(B)$ for all $i = 1, \ldots,n$ and $\lambda \in \mathbb D$, and moreover, 
\beq
\label{multiplication-norm}
\|b_i\kappa_\lambda\|_{H(B)}^2 &=& \frac{\|(A(\lambda)^{-1})^*e_i\|^2 -1}{1 - |\lambda|^2}\\
\label{L-combo-norm}   \text{and}~~\,\,\,\, 
\|L(b_i\kappa_\lambda)\|_{H(B)}^2 &=& \frac{\|(A(\lambda)^{-1})^*e_i\|^2 -1}{1 - |\lambda|^2} + 2 \Re\big(\inp{A(\lambda)^{-1}A(0)e_i}{e_i}\big) - |b_i(0)|^2\notag\\ 
&& \,\,\, -\|A(0)e_i\|^2 - \|(A(\lambda)^{-1})^*e_i\|^2, \quad \lambda \in \mathbb D,~~i = 1, \ldots, n,
\eeq
where $\Re(z)$ denotes the real part of $z \in \mathbb C$.
Consequently, $b_i\in H(B)~(i = 1, \ldots,n)$ and
\beqn 
&&\|b_i\|_{H(B)}^2 = \|(A(0)^*)^{-1}e_i\|^2 - 1,\\
&&\|Lb_i\|_{H(B)}^2 = 1 - |B(0)e_i|^2 - \|A(0)e_i\|^2, \quad i=1, \ldots, n.
\eeqn
\end{corollary}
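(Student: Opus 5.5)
The plan is to apply the characterization \eqref{f^+ toeplitz expression} together with the norm formula \eqref{norm-formula}, or equivalently Theorem~\ref{domain-debranges}. For $f=b_i\kappa_\lambda$ we want an explicit $f^+\in H^2_{\mathbb C^n}$ with $T_B^*f+T_A^*f^+=0$; then we compute $\|f\|_{H^2}^2+\|f^+\|^2$.

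\textbf{Step 1: computing $f^+$.} On $\mathbb T$ we have $B^*b_i=B^*Be_i=e_i-A^*Ae_i$ by \eqref{unique-A}. Since $P(B^*b_i\kappa_\lambda)=T_B^*(b_i\kappa_\lambda)$ and $P(A^*Ae_i\kappa_\lambda)=T_A^*(Ae_i\kappa_\lambda)$, this gives
\[
T_B^*(b_i\kappa_\lambda)=\kappa_\lambda e_i-T_A^*(Ae_i\kappa_\lambda).
\]
In the proof of Corollary~\ref{kappa-dense} we saw that $T_A^*(\kappa_\lambda v)=A(\lambda)^*\kappa_\lambda v$. Putting $w:=(A(\lambda)^{-1})^*e_i$, this yields $\kappa_\lambda e_i=T_A^*(\kappa_\lambda w)$. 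Hence
\[
f^+=Ae_i\kappa_\lambda-\kappa_\lambda w\in H^2_{\mathbb C^n},
\]
so $b_i\kappa_\lambda\in H(B)$.

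\textbf{Step 2: the norm \eqref{multiplication-norm}.} We expand
\[
\|b_i\kappa_\lambda\|^2_{H^2}+\|Ae_i\kappa_\lambda\|^2-2\Re\langle Ae_i\kappa_\lambda,\kappa_\lambda w\rangle+\|w\|^2\|\kappa_\lambda\|^2 .
\]
The first two terms combine to $\int_{\mathbb T}(|b_i|^2+\|Ae_i\|^2)|\kappa_\lambda|^2\,dm$. By the $(i,i)$ entry of \eqref{unique-A} this equals $\|\kappa_\lambda\|^2=(1-|\lambda|^2)^{-1}$. By the reproducing property, the cross term equals
\[
\frac{\langle A(\lambda)e_i,w\rangle}{1-|\lambda|^2}=\frac{\langle e_i,e_i\rangle}{1-|\lambda|^2}=\frac{1}{1-|\lambda|^2}.
\]
Summing gives $(\|w\|^2-1)/(1-|\lambda|^2)$.

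\textbf{Step 3: the formula \eqref{L-combo-norm}.} Write $L=T_{\bar z}$ on $H^2$ and on $H^2_{\mathbb C^n}$. Toeplitz operators with coanalytic symbols commute, so $LT_B^*=T_B^*L$ and $LT_A^*=T_A^*L$. Applying $L$ to $T_B^*f+T_A^*f^+=0$ then shows $(Lf)^+=Lf^+$. Since $\|Lg\|^2=\|g\|^2-\|g(0)\|^2$ in $H^2$, we get
\[
\|Lf\|_{H(B)}^2=\|f\|_{H(B)}^2-|b_i(0)|^2-\|A(0)e_i-w\|^2 .
\]
Here we used $f(0)=b_i(0)$ and $f^+(0)=A(0)e_i-w$, because $\kappa_\lambda(0)=1$. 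Expanding the last norm and using $\langle A(0)e_i,(A(\lambda)^{-1})^*e_i\rangle=\langle A(\lambda)^{-1}A(0)e_i,e_i\rangle$ gives \eqref{L-combo-norm}.

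\textbf{Step 4: the consequences.} Setting $\lambda=0$ (so $\kappa_0=1$) gives the formulas for $\|b_i\|_{H(B)}$ and $\|Lb_i\|_{H(B)}$. In the second formula the terms $\pm\|w\|^2$ cancel, and $2\Re\langle A(0)^{-1}A(0)e_i,e_i\rangle=2$.

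The main point needing care is Step~1, namely justifying that $P(B^*b_i\kappa_\lambda)$ splits as stated. This follows because $A$ and $B$ are bounded on $\mathbb T$, so all the functions involved lie in $L^2$. The commutation $L T_B^*=T_B^*L$ in Step~3 is the other point to check; it is standard since $\bar z$ and $B^*$ are both coanalytic.
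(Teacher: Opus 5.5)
Your proposal is correct and follows the paper's proof essentially step for step. It uses the same $f^+=(A-(A(\lambda)^*)^{-1})\kappa_\lambda e_i$, the same reproducing-property evaluation of the cross term, and the same reduction $\|Lg\|^2=\|g\|^2-\|g(0)\|^2$ for the formula for $\|L(b_i\kappa_\lambda)\|_{H(B)}^2$. The only difference is in justifying $(Lf)^+=Lf^+$: you derive it directly from $LT_B^*=T_B^*L$ and $LT_A^*=T_A^*L$, which also gives $Lf\in H(B)$ without using boundedness of $L$ on $H(B)$, whereas the paper cites \cite[Theorem~2.2(ii)]{CGL}.
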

\begin{proof}
Fix $i \in \{1, \ldots, n\}$ and $\lambda \in \mathbb D$. Note first that $\kappa_\lambda e_i \in H^2_{\mathbb C^n}$ is given by 
\beq \label{series-expression}
\kappa_\lambda e_i(z) = \sum_{m=0}^\infty ((\overline{\lambda})^m e_i)z^m, \quad z \in \mathbb D.
\eeq
 As shown in the proof of Corollary~\ref{kappa-dense}, we have $T_A^*(\kappa_\lambda e_i)= A(\lambda)^*(\kappa_\lambda e_i)$.
% \beqn
% \inp{T_A^*(\kappa_\lambda e_i)}{h} = \inp{\kappa_\lambda e_i}{Ah} = \inp{e_i}{A(\lambda)h(\lambda)} &=& \inp{A(\lambda)^*e_i}{h(\lambda)}\\
% &=& \inp{\kappa_\lambda (A(\lambda)^*e_i)}{h}\\
% &=& \inp{A(\lambda)^*(\kappa_\lambda e_i)}{h},
% \eeqn
Since $A(\lambda)$ is invertible, $\kappa_\lambda e_i = (A(\lambda)^*)^{-1}T_A^*(\kappa_\lambda e_i)$, and hence,  
% \beqn
% \label{eigenvector-iden}
% \eeqn
by \eqref{unique-A},    
\beqn
T_B^*(b_i\kappa_\lambda ) = P(B^*B(\kappa_\lambda e_i)) &=& \kappa_\lambda e_i - P((A^*A)(\kappa_\lambda e_i))\\
&=& T_A^* ((A(\lambda)^*)^{-1}\kappa_\lambda e_i) - P(A^*A (\kappa_\lambda e_i))\\
&=&   T_A^*\big(((A(\lambda)^*)^{-1}-A)(\kappa_\lambda e_i)\big).
    \eeqn
Thus, by \eqref{f^+ toeplitz expression},  $b_i\kappa_\lambda  \in H(B)$ with
\beq
\label{membership-comb}
(b_i\kappa_\lambda )^+ = (A-(A(\lambda)^*)^{-1})(\kappa_\lambda e_i).
\eeq
Moreover, by the reproducing property,
\beq
\label{useful-identity}
\inp{A(\kappa_\lambda e_i)}{(A(\lambda)^*)^{-1}\kappa_\lambda e_i}_{H^2_{\mathbb C^n}}^2 = \inp{(A(\lambda))^{-1}A(\kappa_\lambda e_i)}{\kappa_\lambda e_i}_{H^2_{\mathbb C^n}}^2 &=& \inp{(A(\lambda))^{-1}A(\lambda)\kappa_\lambda (\lambda)e_i}{ e_i} \notag\\
&=& \frac{1}{1 - |\lambda|^2}.
\eeq
Now, if we write $\varphi = BA^{-1}: \mathbb D \rar \mathcal L(\mathbb C^n, \mathbb C)$, then by \eqref{membership-comb} and Theorem~\ref{domain-debranges}, $T_\varphi^*(b_i\kappa_\lambda )= ((A(\lambda)^{-1})^* - A)(\kappa_\lambda e_i)$ and therefore,
\beq
\label{combo-norm-in}
\|b_i\kappa_\lambda \|_{H(B)}^2 &=& \|b_i\kappa_\lambda \|_{H^2}^2 + \| T_\varphi^* (b_i\kappa_\lambda )\|_{H^2_{\mathbb C^n}}^2 \notag\\
 &=& \|b_i\kappa_\lambda \|_{H^2}^2 + \| (A-(A(\lambda)^{-1})^*)(\kappa_\lambda e_i)\|_{H^2_{\mathbb C^n}}^2 \notag\\
% &=& \|b_i\kappa_\lambda \|_{H^2}^2 + \|A(\kappa_\lambda e_i)\|_{H^2_{\mathbb C^n}}^2 + \|(A(\lambda)^{-1})^*(\kappa_\lambda e_i)\|_{H^2_{\mathbb C^n}}^2  - 2 \Re\big(\inp{A(\kappa_\lambda e_i)}{(A(\lambda)^{-1})^*(\kappa_\lambda e_i)}\big) \notag\\
&\overset{\eqref{useful-identity}}=& \|B(\kappa_\lambda e_i)\|_{H^2}^2 + \|A(\kappa_\lambda e_i)\|_{H^2_{\mathbb C^n}}^2  + \|(A(\lambda)^{-1})^*(\kappa_\lambda e_i)\|_{H^2_{\mathbb C^n}}^2  - \frac{2}{1-|\lambda|^2}\notag \\
&\overset{\eqref{unique-A}}=&  \|\kappa_\lambda e_i\|_{H^2_{\mathbb C^n}}^2+ \|(A(\lambda)^{-1})^*(\kappa_\lambda e_i)\|_{H^2_{\mathbb C^n}}^2  - \frac{2}{1-|\lambda|^2} \notag\\
&\overset{\eqref{series-expression}}=& \frac{\|(A(\lambda)^{-1})^*e_i\|^2 -1}{1 - |\lambda|^2}.
\eeq
This proves the formula \eqref{multiplication-norm}.

To see \eqref{L-combo-norm}, since $L$ is a bounded operator on $H(B)$, it follows that $L(b_i\kappa_\lambda ) \in H(B)$. Since $(Lb_i\kappa_\lambda )^+ = L ((b_i\kappa_\lambda )^+)$ (see, \cite[Theorem~2.2(ii)]{CGL}), we get 
\beqn
(Lb_i\kappa_\lambda )^+(z)  &\overset{\eqref{membership-comb}}=& L(A(\kappa_\lambda e_i)) - (A(\lambda)^{-1})^*L(\kappa_\lambda e_i), 
\eeqn
and hence, by Theorem~\ref{domain-debranges}, $T_\varphi^*(L(b_i\kappa_\lambda )) = L\big((A(\lambda)^{-1})^*(\kappa_\lambda e_i) - (A(\kappa_\lambda e_i))\big)$. Therefore, we get
\beqn
\|L(b_i\kappa_\lambda )\|_{H(B)}^2 &=& \|L(b_i\kappa_\lambda )\|_{H^2}^2 + \|L\big((A(\kappa_\lambda e_i)) - (A(\lambda)^{-1})^*(\kappa_\lambda e_i)\big)\|_{H^2_{\mathbb C^n}}^2\\
&=& \|b_i\kappa_\lambda \|_{H^2}^2  \! + \|(A- (A(\lambda)^{-1})^*) \kappa_\lambda e_i\|_{H^2_{\mathbb C^n}}^2 \!\!\!- |b_i(0)|^2 \!- \! \|(A(0)- (A(\lambda)^{-1})^*)e_i\|^2\\
&=& \|b_i\kappa_\lambda \|_{H(B)}^2 - |b_i(0)|^2 - \|(A(0)- (A(\lambda)^{-1})^*)e_i\|^2\\
&\overset{\eqref{combo-norm-in}}=& \frac{\|(A(\lambda)^{-1})^*e_i\|^2 -1}{1 - |\lambda|^2} - |b_i(0)|^2 -  \|A(0)e_i- (A(\lambda)^{-1})^*e_i\|^2  \\
&=& \frac{\|(A(\lambda)^{-1})^*e_i\|^2 -1}{1 - |\lambda|^2} - |b_i(0)|^2 - \|A(0)e_i\|^2 - \|(A(\lambda)^{-1})^*e_i\|^2 \\
&&+ 2 \Re\big(\inp{A(\lambda)^{-1}A(0)e_i}{e_i}\big).
\eeqn
This proves \eqref{L-combo-norm}. Finally, since $k_0 \equiv 1$, letting $\lambda = 0$ in \eqref{multiplication-norm} and \eqref{L-combo-norm} yields the remaining formulas. This concludes the proof.
\end{proof}

 The following is an application of Theorem~\ref{norm-formula-thm}. For $n=1$, norm of functions below is computed in \cite[pg.~81]{Sa2} and \cite[pg.~32]{Sa4}.

\begin{corollary}\label{monomial-norm}
    Let  $B \in \mathcal S(\mathbb C^n, \mathbb C)$ be such that $\log(1-BB^*) \in L^1(\mathbb T).$ Suppose $A: \mathbb D \rar \mathcal L(\mathbb C^n, \mathbb C^n)$ is an outer function satisfying \eqref{unique-A}. Let $\varphi = BA^{-1}: \mathbb D \rar \mathcal L(\mathbb C^n, \mathbb C)$ admit the series expansion $\sum_{j=0}^\infty c_jz^j$. If $\kappa_\lambda $ $(\lambda \in \mathbb D)$ is the function defined as in \eqref{kappa},
then $z^m\kappa_\lambda \in H(B)$ for all integers $m \Ge 0$ and $\lambda \in \mathbb D$, and moreover, for all $m \Ge 1$,
\beq
\label{monomial-combi-norm}
\|z^m\kappa_\lambda \|_{H(B)}^2 &=& \frac{1+B(\lambda)(A(\lambda)^*A(\lambda))^{-1}B(\lambda)^*}{1-|\lambda|^2} + \sum_{k=0}^{m-1}\Big\|\sum_{j=0}^{\infty} c_{j+m-k}^* (\overline{\lambda})^{j}\Big\|^2 \\
&\text{and}&\label{kernelfunction-formula}\,\|\kappa_\lambda \|_{H(B)}^2 = \frac{1+B(\lambda)(A(\lambda)^*A(\lambda))^{-1}B(\lambda)^*}{1-|\lambda|^2}, \quad \lambda \in \mathbb D.
\eeq
% Consequently,
% \beq
% \label{monomial-formula}
% \|z^m\|^2_{H(B)} = 1 + \sum_{j=0}^m c_{j} c_{j}^*, \quad m \Ge 0.
% \eeq
\end{corollary}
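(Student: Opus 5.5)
The plan is to apply Theorem~\ref{norm-formula-thm} directly to $f=z^m\kappa_\lambda$. For fixed $\lambda\in\mathbb D$, this function is holomorphic on the disc $|z|<1/|\lambda|$, which contains $\overline{\mathbb D}$ (for $\lambda=0$ it is a polynomial). So membership $z^m\kappa_\lambda\in H(B)$ follows from the remark preceding Theorem~\ref{norm-formula-thm}, and \eqref{norm-formula-nbd} applies. The Taylor coefficients are $\hat f(i)=0$ for $i<m$ and $\hat f(i)=(\overline{\lambda})^{\,i-m}$ for $i\Ge m$.

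\textbf{The $H^2$ term.} The first sum in \eqref{norm-formula-nbd} is $\sum_{i\Ge m}|\lambda|^{2(i-m)}=\frac{1}{1-|\lambda|^2}$.

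\textbf{The second term.} I will split the outer sum over $k$ into two ranges.

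For $k\Ge m$, every $j\Ge 0$ has $j+k\Ge m$, so
$$\sum_{j=0}^\infty c_j^*\hat f(j+k)=(\overline\lambda)^{\,k-m}\sum_{j=0}^\infty c_j^*(\overline\lambda)^{j}=(\overline\lambda)^{\,k-m}\varphi(\lambda)^*.$$
This uses that the Taylor series of $\varphi$ converges (absolutely) at $\lambda\in\mathbb D$. Summing $\|\cdot\|^2$ over $k\Ge m$ gives $\frac{\|\varphi(\lambda)^*\|^2}{1-|\lambda|^2}$. Since $\varphi(\lambda)$ is a row, $\|\varphi(\lambda)^*\|^2=\varphi(\lambda)\varphi(\lambda)^*$. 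Moreover,
$$\varphi(\lambda)\varphi(\lambda)^*=B(\lambda)A(\lambda)^{-1}(A(\lambda)^*)^{-1}B(\lambda)^*=B(\lambda)\big(A(\lambda)^*A(\lambda)\big)^{-1}B(\lambda)^*.$$
Combined with the $H^2$ term, this yields the first summand $\frac{1+B(\lambda)(A(\lambda)^*A(\lambda))^{-1}B(\lambda)^*}{1-|\lambda|^2}$ in \eqref{monomial-combi-norm}.

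For $0\Le k\Le m-1$, only the indices $j\Ge m-k$ contribute. Substituting $j=i+m-k$ gives
$$\sum_{i=0}^\infty c_{i+m-k}^*(\overline\lambda)^{\,i},$$
which produces the finite sum $\sum_{k=0}^{m-1}\big\|\sum_{j} c^*_{j+m-k}(\overline\lambda)^j\big\|^2$ in \eqref{monomial-combi-norm}.

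\textbf{The case $m=0$.} Here the range $k<m$ is empty, which gives \eqref{kernelfunction-formula}. As a cross-check, the proof of Corollary~\ref{kappa-dense} gives $\kappa_\lambda^+=-\varphi(\lambda)^*\kappa_\lambda$, and Theorem~\ref{domain-debranges} then yields the same value $\|\kappa_\lambda\|_{H^2}^2(1+\|\varphi(\lambda)^*\|^2)$.

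The only delicate point is the bookkeeping. One must justify regrouping the inner series as values of $\varphi^*$ and its tail series at $\lambda$. This is covered by the absolute convergence asserted in Theorem~\ref{norm-formula-thm}, together with the absolute convergence of the Taylor series of $\varphi$ inside $\mathbb D$. One must also verify that $A(\lambda)$ is invertible, so that $(A(\lambda)^*A(\lambda))^{-1}$ makes sense; this holds because $A$ is outer.
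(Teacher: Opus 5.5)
Your proposal is correct and takes the same route as the paper. You apply Theorem~\ref{norm-formula-thm} to $f=z^m\kappa_\lambda$ with $\hat f(i)=(\overline{\lambda})^{\,i-m}$ for $i\Ge m$, split the outer sum into $k\Ge m$ (yielding $|\lambda|^{2(k-m)}\|\varphi(\lambda)^*\|^2$) and $k\Le m-1$ (the reindexed tails), and drop the second range when $m=0$. You also spell out the membership claim and the identity $\varphi(\lambda)\varphi(\lambda)^*=B(\lambda)(A(\lambda)^*A(\lambda))^{-1}B(\lambda)^*$, both of which the paper leaves implicit.
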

\begin{proof} With notations as in Theorem~\ref{norm-formula-thm}, $f = z^m\kappa_\lambda = \sum_{j = 0}^\infty \hat{f}(j) z^j,~z \in \mathbb D$, where $\hat{f}(j) = (\overline{\lambda})^{j-m}$ if $j \Ge m$, and $0$, otherwise. If $m \Ge 1$, then it follows from Theorem~\ref{norm-formula-thm} that
\beqn
\|z^m\kappa_\lambda \|_{H(B)}^2  &=& \sum_{j=0}^\infty |\hat{f}(j)|^2 + \sum_{k=0}^\infty \Big\|\sum_{j=0}^{\infty} c_j^* \hat{f}(j+k)\Big\|^2\\
&=&  \sum_{j=m}^\infty |\lambda|^{2(j-m)}  + \sum_{k=m}^\infty \Big\|\sum_{j=0}^{\infty} c_j^* (\overline{\lambda})^{j+k-m}\Big\|^2 +\sum_{k=0}^{m-1} \Big\|\sum_{j=m-k}^{\infty} c_j^* (\overline{\lambda})^{j+k-m}\Big\|^2\\
&=& \sum_{j=m}^\infty |\lambda|^{2(j-m)} +  \sum_{k=m}^\infty |\lambda|^{2(k-m)} \Big\|\sum_{j=0}^{\infty} c_j^* (\overline{\lambda})^{j}\Big\|^2 + \sum_{k=0}^{m-1} \Big\|\sum_{j=0}^{\infty} c_{j+m-k}^* (\overline{\lambda})^{j}\Big\|^2\\
&=& \frac{1}{1-|\lambda|^2} \Big(1 + \Big\|\sum_{j=0}^{\infty} c_j^* (\overline{\lambda})^{j}\Big\|^2\Big) + \sum_{k=0}^{m-1} \Big\|\sum_{j=0}^{\infty} c_{j+m-k}^* (\overline{\lambda})^{j}\Big\|^2\\
&=& \frac{1 + \|(BA^{-1})^*(\lambda)\|^2}{1-|\lambda|^2} + \sum_{k=0}^{m-1} \Big\|\sum_{j=0}^{\infty} c_{j+m-k}^* (\overline{\lambda})^{j}\Big\|^2.
\eeqn
This proves \eqref{monomial-combi-norm}. To verify \eqref{kernelfunction-formula}, we repeat the steps used to derive \eqref{monomial-combi-norm} by neglecting the final term in each expression and the result follows immediately.
% Now, the part (i) is immediate.  To see (ii), we let $m = 0$ in \eqref{monomial-combi-norm} and obtain 
%     \beqn
% \|\kappa_\lambda \|_{H(B)}^2 
% = \frac{1}{1-|\lambda|^2}\Big(1 + \Big\|\sum_{j=0}^{\infty} c_j^* (\overline{\lambda})^{j}\Big\|^2\Big)
% &=&  \frac{1 + \|(BA^{-1})^*(\lambda)\|^2}{1-|\lambda|^2}\\
% &=&  \frac{1+B(\lambda)(A(\lambda)^*A(\lambda))^{-1}B(\lambda)^*}{1-|\lambda|^2}, \quad \lambda \in \mathbb D.
%     \eeqn
    This finishes the proof.
% (i): Since $\log(1-BB^*) \in L^1(\mathbb T)$, by \cite[Lemma~2.5]{CGL}, $z^m \in H(B)$ for every $m \Ge 0$. Now, by Theorem~\ref{norm-formula-thm}, we obtain 
\end{proof}

% \begin{example}[Example~\ref{explicit-ex1} continued...]

% \begin{corollary}\label{symbols-norm}
%     Let  $B = (b_1,\ldots, b_n)\in \mathcal S(\mathbb C^n, \mathbb C)$ such that $\log(1-BB^*) \in L^1(\mathbb T).$ Suppose $A: \mathbb D \rar \mathcal L(\mathbb C^n, \mathbb C^n)$ be an outer function satisfying \eqref{unique-A}. Then, 
% \end{corollary}
% \begin{proof}
	
% \end{proof}

Below, we present an example of $H(B)$ and provide explicit computations of norm formulas for various functions of the space using the above results. 

\begin{example}\label{explicit-ex1}
    Let $u$ be a non-constant inner function and let $\omega \in \mathbb C$ be such that $\omega^3 = 1$, $w\neq 1$. Consider $B = (b_1, b_2)$ with $b_j = \frac{1+ \omega^{j-1}u}{\sqrt{6}},~j=1,2$. 
    Since $u$ is non-constant and inner, $1+\omega^2 u$ is analytic and $\Re(1+\omega^2 u) > 0$ on unit disc, and hence by \cite[pg.~51, Exercise~1]{Duren}, $1+\omega^2 u$ is outer. Since $\sum_{j=0}^2 |1 + \omega^{j}u|^2 = 6$ a.e. on $\mathbb T$,  a scalar valued mate $a$ is given by
    $$
    a =  \frac{1+ \omega^{2}u}{\sqrt{6}}, \quad z \in \mathbb D.
    $$
     Moreover, by using the identities $1 + \omega + \omega^2 = 0$ and $\omega^2 = \overline{\omega}$, we have
    $$
    B^*B = \frac{1}{6}
    \begin{pmatrix}
        2 + u +  \overline{u}  & \omega u + \overline{u} - \omega^2 \\
        u + \omega^2 \overline{u} - \omega  & 2 + \omega u + \overline{\omega u}
    \end{pmatrix} \quad \text{a.e. on $\mathbb T$},
    $$
    so that 
    $$
    I - B^*B = \frac{1}{6}
    \begin{pmatrix}
        4 -u  - \overline{u}  & \omega^2- \omega u - \overline{u}   \\
       \omega - u - \omega^2 \overline{u}   & 4 - \omega u -\overline{\omega u}
    \end{pmatrix} \quad \text{a.e. on $\mathbb T$}.
    $$
   If we let 
   $$
   A = \frac{1}{\sqrt{6}} \begin{pmatrix}
        1 - u  & 1 - \omega u \\
       \sqrt{2}  & \sqrt{2} \omega^2
    \end{pmatrix},
   $$
   then it follows after some steps of calculations that
   $$
   A^*A = \frac{1}{6} \begin{pmatrix}
        |1 - u|^2 + 2  & (1 - \overline{u}) (1 - \omega u) + 2\omega^2  \\
        (1 - \overline{\omega u}) (1 - u) + 2 \omega & |1 - \omega u|^2 +2
    \end{pmatrix}
   =  I - B^*B \quad \text{a.e. on $\mathbb T$}.
   $$
   To verify that $A$ is outer, note first that
$$
\det(A) = \frac{\sqrt{2}}{6} \omega^2 (1 - \omega) (1 + \omega^2 u).
$$
Since $u$ is non-constant, $1+\omega^2 u$ is analytic and $\Re(1+\omega^2 u) > 0$ on unit disc, and hence by \cite[pg.~51, Exercise~1]{Duren}, $1+\omega^2 u$ is outer. Therefore, $\det(A)$ is outer, and so by \cite[pg. 125]{Hel}, $A$ is outer. In this case, $B(0) \neq 0$, and 
$$
(A(0)^*)^{-1} = \frac{\sqrt{3}}{(\omega - 1) (1 + \omega \overline{u(0)})}
\begin{pmatrix}
        \sqrt{2} \omega  & -\sqrt{2} \\
        \omega^2 \overline{u(0)} - 1 & 1 - \overline{u(0)}
    \end{pmatrix}.
$$ Thus, by Corollary~\ref{symbolss-norm}, $b_i~ (i=1,2)$ belongs to $H(B)$, and
$$
\|b_1\|_{H(B)} = \frac{ \sqrt{2(1 + \Re(u(0))}} {|1+\omega \overline{u(0)}|}, 
$$
and 
$$
\|b_2\|_{H(B)} = \frac{\sqrt{2(1 + \Re(\omega \,u(0))}} {|1+\omega \overline{u(0)}|}. 
$$
Furthermore, we obtain 
$$
BA^{-1} = 
 \begin{pmatrix}
        \frac{\omega - u}{\omega + u}  & \frac{-\sqrt{2} \omega^2 u}{\omega + u}
    \end{pmatrix}.
$$
% This yields that BA^{-1}(BA^{-1})^* = \frac{|\omega - }
For instance, if $u(z) = z$, then it can be easily seen that
$$
BA^{-1}(z) = (1, 0) + \sum_{m=1}^\infty 
\begin{pmatrix}
        2(-\omega^{2})^m  & \sqrt{2}(-\omega^2)^m \omega^{2}
    \end{pmatrix} 
    z^m, \quad z \in \mathbb D.
$$
If $\{c_m\}_{m=0}^\infty$ denotes the sequence of Taylor coefficients of $BA^{-1}$, then $c_0 c_0^* = 1$ and 
$$
c_m c_m^* = 6, \quad m \Ge 1.
$$
Therefore, by \eqref{monomial-formula-lemma}, $\|z^m\|^2_{H(B)} = 2 + 6m, ~ m \Ge 0$.
% & \|\kappa_\lambda \|^2_{H(B)} = (1 - |\lambda|^2)^{-1} \Big(2 - 4 \Re\big(\frac{\omega^2 \lambda}{1 + \omega^2 \lambda}\big) + 6\sum\limits_{m,n=1}^\infty (-1)^{m+n}\omega^{2m+n} \lambda^m \overline{\lambda}^n \Big), \quad \lambda \in \mathbb D.
\end{example}

 % \section{An explicit norm formula of elements of $H(B)$}\label{3}

% Let $\Delta \in L^\infty(\mathbb T, M_n(\mathbb C))$ be defined by
% \beqn
% \Delta(w) = (I- B(w)^*B(w))^{1/2}, \quad \mbox{a.e.\,}\, w \in \mathbb T.
% % = 
% % \begin{pmatrix}
% % 1 - |b_1(w)|^2 & -\overline{b_1(w)}b_2(w)\\
% % -\overline{b_2(w)}b_1(w) & 	1 - |b_2(w)|^2
% % \end{pmatrix}^{1/2}.
% \eeqn
% Then, for any $f \in H^2(\mathbb T, \mathbb C^n),$ define
% \beqn
% (\Delta f)(w) = \Delta(w)f(w), \quad \mbox{a.e.\,}\, w \in \matthbb T.
% \eeqn 
% Let $\mathcal K$ and $U$ be the subspaces of $ H^2(\mathbb T, \mathbb C) \oplus L^2(\mathbb T, \mathbb C^n) $ defined by 
% \beqn
% \mathcal K: = H^2(\mathbb T, \mathbb C) \oplus \overline{\Delta H^2(\mathbb T, \mathbb C^n)},\,\,U: = \{(Bf, \Delta f): f \in H^2(\mathbb T, \mathbb C^n)\}.
% \eeqn
% Then, the projection $P: \mathcal K \rar H^2(\mathbb T, \mathbb C)$ defined by $P(f \oplus g) = f,$ is an injective function on $\mathcal K \ominus U.$ This implies that for every $f \in H(B),$ there exists a unique element $f^+ \in \overline{\Delta H^2(\mathbb T, \mathbb C^n)}$ such that $(f, f^+) \in \mathcal K \ominus U,$ and 
% \beqn
% \|f\|_{H(B)}^2 = \|f\|_{H^2(\mathbb T, \mathbb C)}^2 + \|f^+\|_{L^2(\mathbb T, \mathbb C^n)}^2, \quad f \in H(B),
% \eeqn
% see \cite{}. 

% An immediate consequence of Theorem~\ref{norm-formula-thm} yields the norm of Szeg$\ddot{o}$ kernel at points of $\mathbb D$. 

% \begin{corollary}\label{kernel-norm}
%  With the notations as in Theorem~\ref{norm-formula-thm}, 
% \end{corollary}
% \begin{proof}
  
% \end{proof}

\section{On the inclusion $H^\infty \subseteq H(B)$}\label{4}

In this section,  our main goal is to characterize when finite rank $H(B)$ contains $H^\infty$.

% our main goal is to prove Theorem C, which we restate below for convenience.

% This extends the classical case proved by Sarason \cite[Theorem~1]{Sa2}.
\begin{theorem}\label{bounded-contained}
    Let  $B \in \mathcal S(\mathbb C^n, \mathbb C)$ be such that $\log(1-BB^*) \in L^1(\mathbb T).$ Let $a$ and $A$ be outer functions satisfying \eqref{scalar outer a} and \eqref{unique-A}, respectively. Then, the following are equivalent:
    \begin{itemize}
        \item [(i)] $H^\infty \subseteq H(B)$,
        \item [(ii)] $\sup_{m \Ge 0} \|z^m\|_{H(B)} < \infty$,
        \item [(iii)] $BA^{-1} \in H^2_{\mathcal L(\mathbb C^n, \mathbb C)}$,
        \item[(iv)] $\frac{B}{a}\in H^2_{\mathcal L(\mathbb C^n, \mathbb C)}$,
        \item [(v)] $(1 - BB^*)^{-1} \in L^{1}(\mathbb T)$.
        \end{itemize}
    \end{theorem}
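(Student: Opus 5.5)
We will prove the cycle (i) $\Rightarrow$ (ii) $\Rightarrow$ (iii) $\Rightarrow$ (i), and separately (iii) $\Leftrightarrow$ (v) $\Leftrightarrow$ (iv). For (i) $\Rightarrow$ (ii), the inclusion $H^\infty\subseteq H(B)$ has closed graph, because both $H^\infty$ and $H(B)$ embed continuously into the space of holomorphic functions with pointwise convergence. It is therefore bounded, so $\|z^m\|_{H(B)}\Le C\|z^m\|_\infty=C$. For (ii) $\Rightarrow$ (iii), we use Corollary~\ref{monomial-formula-lemma-state}, which gives $\|z^m\|^2_{H(B)}=1+\sum_{j=0}^m\|c_j\|^2$. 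Boundedness in $m$ then means $\sum_j\|c_j\|^2<\infty$, i.e.\ $\varphi=BA^{-1}\in H^2_{\mathcal L(\mathbb C^n,\mathbb C)}$.

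For (iii) $\Rightarrow$ (i), we use Theorem~\ref{domain-debranges}: it suffices to show $H^\infty\subseteq\mathcal D(T_\varphi^*)$. Fix $f\in H^\infty$ and $g=Ah\in\mathcal D(T_\varphi)$, so that $T_\varphi g=Bh$. Set $\psi=\varphi^* f$ on $\mathbb T$; it lies in $L^2(\mathbb T,\mathbb C^n)$ since $\varphi\in H^2$ and $f\in L^\infty$. Then
\[
\langle T_\varphi g,f\rangle=\int_{\mathbb T} f^*\varphi g\,dm=\langle g,P\psi\rangle .
\]
This is justified because $\varphi g\in L^1$ and $\varphi g=Bh\in H^2$. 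Hence $g\mapsto\langle T_\varphi g,f\rangle$ is bounded, so $f\in\mathcal D(T_\varphi^*)$ with $T_\varphi^*f=P(\varphi^*f)$. Thus $f\in H(B)$.

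For (iii) $\Leftrightarrow$ (v), we compute on $\mathbb T$. By \eqref{unique-A}, $A^*A=I-B^*B$. For a row vector $B$, the matrix $I-B^*B$ has eigenvalue $1-BB^*$ on $B^*$ and $1$ on its orthogonal complement, so $(A^*A)^{-1}=I+\frac{B^*B}{1-BB^*}$. Therefore
\[
\varphi\varphi^*=B(A^*A)^{-1}B^*=BB^*+\frac{(BB^*)^2}{1-BB^*}=\frac{BB^*}{1-BB^*}=\frac{1}{1-BB^*}-1 .
\]
Hence $\varphi|_{\mathbb T}\in L^2$ if and only if $(1-BB^*)^{-1}\in L^1$. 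The same identity holds with $a$ in place of $A$, since $|a|^2=1-BB^*$ by \eqref{scalar outer a}, so $\|B/a\|^2=BB^*/(1-BB^*)$ on $\mathbb T$. Thus the boundary functions of both $\varphi$ and $B/a$ are square integrable exactly when (v) holds.

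The main obstacle is upgrading boundary square-integrability to membership in $H^2$, since $\varphi$ and $B/a$ are only a priori holomorphic in $\mathbb D$. For $B/a$, each entry is a bounded function divided by an outer function, so it lies in the Smirnov class, and Smirnov's theorem (\cite[Corollary~I.3]{Nik}) gives $L^2$ boundary values $\Rightarrow H^2$. For $\varphi=B\,\operatorname{adj}(A)/\det A$, each entry is an $H^\infty$ function divided by the outer function $\det A$ (outer by \cite[pg.~125]{Hel}), so the same argument applies exactly as in Lemma~\ref{domain-toeplitz}. Conversely, $H^2$ membership implies $L^2$ boundary values. This closes all the equivalences.
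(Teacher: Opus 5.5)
Your proof is correct, but it closes the cycle differently from the paper.

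\textbf{Closing step.} The paper proves (v) $\Rightarrow$ (i) function-theoretically. Since $|a|^{-2}=(1-BB^*)^{-1}\in L^1(\mathbb T)$ and $a$ is outer, $1/a\in H^2$. Hence every $h\in H^\infty$ lies in $\mathcal M(a)=aH^2$, and the inclusion $\mathcal M(a)\subseteq\mathcal M(\overline{a})\subseteq H(B)$ is imported from \cite[Lemma~2.5]{CGL}.

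You instead prove (iii) $\Rightarrow$ (i) operator-theoretically from Theorem~\ref{domain-debranges}. You verify that $\langle T_\varphi g,f\rangle=\langle g,P(\varphi^*f)\rangle$ for all $g\in\mathcal D(T_\varphi)$.

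\textbf{What each route gives.} Your route keeps the argument internal to the paper. It also identifies $f^+=-P(\varphi^*f)$ for bounded $f$. It gives an explicit embedding constant, $\|f\|_{H(B)}^2\Le(1+\|\varphi\|_{H^2}^2)\|f\|_\infty^2$, which the closed graph argument in (i) $\Rightarrow$ (ii) provides only abstractly. Your computation even shows that any $f\in H^2$ with $\varphi^*f\in L^2(\mathbb T,\mathbb C^n)$ belongs to $H(B)$. The paper's route, in turn, yields the structurally stronger fact $H^\infty\subseteq aH^2$.

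\textbf{Remaining steps.} These agree with the paper's up to reordering. You use (v) as the hub, via $\varphi\varphi^*=(1-BB^*)^{-1}-1$ on $\mathbb T$. The paper instead links (iii) and (iv) through the identity $B(A^*A)^{-1}B^*=BB^*/|a|^2$, and then links (iv) with (v). The Smirnov-class upgrade from $L^2$ boundary values to $H^2$ membership is the same in both.
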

    
We need a couple of lemmas to prove Theorem~\ref{bounded-contained}. Recall that the {\it Smirnov class}, denoted by $N^+(\mathcal L(\mathbb C^n, \mathbb C))$, consists of analytic functions of the form $\frac{u}{v}$, where $u\in H^\infty_{\mathcal L(\mathbb C^n, \mathbb C)}$ and $v\in H^{\infty}$ is outer, see \cite[Section~4.7]{Ros-Rov}. 
% \iff \int_{\mathbb T} \|f(\zeta)\|^2 dm(\zeta) < \infty
\begin{lemma}
\label{smirnov}
  Let $B \in \mathcal S(\mathbb C^n, \mathbb C)$ be such that $\log(1-BB^*) \in L^1(\mathbb T).$ Let $a$ and $A$ be outer functions satisfying \eqref{scalar outer a} and \eqref{unique-A}, respectively. Then, both $\frac{B}{a}$ and $BA^{-1}$ belong to $N^+(\mathcal L(\mathbb C^n, \mathbb C))$.
      % \item [(ii)] for any $h \in H^2_{\mathbb C^n}$, $BA^{-1}h \in N^+(\mathbb C)$.
  \end{lemma}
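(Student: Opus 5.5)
The idea is to exhibit each quotient directly as a bounded row-valued function divided by a bounded scalar outer function. That is exactly the form required by the definition of $N^+(\mathcal L(\mathbb C^n,\mathbb C))$.

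\emph{The quotient $B/a$.} By \eqref{scalar outer a} we have $|a|\Le 1$ a.e. on $\mathbb T$. Since $a$ is outer, its modulus inside the disc is controlled by the Poisson integral of $\log|a|$ on the circle, so $a\in H^\infty$. Each $b_j$ lies in $H^\infty$ because $B$ is a Schur function. Hence $B=(b_1,\ldots,b_n)\in H^\infty_{\mathcal L(\mathbb C^n,\mathbb C)}$. With $v=a$, which is outer and bounded, and $u=B$, we get $B/a=u/v\in N^+(\mathcal L(\mathbb C^n,\mathbb C))$ immediately.

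\emph{The quotient $BA^{-1}$.} I will use the adjugate formula
$$A^{-1}=\frac{\operatorname{adj}(A)}{\det(A)},$$
which is valid on $\mathbb D$ since $A$ is invertible there. The steps are as follows.
\begin{enumerate}
\item As in the proof of Lemma~\ref{domain-toeplitz}, \eqref{unique-A} gives $A^*A\Le I$ a.e. on $\mathbb T$. So every entry of $A$ is in $H^\infty$.
\item Consequently every entry of $\operatorname{adj}(A)$ is in $H^\infty$, being a polynomial in the entries of $A$. The same holds for $\det(A)$.
\item By \cite[pg.~125]{Hel} (already used in Lemma~\ref{domain-toeplitz}), $\det(A)$ is outer because $A$ is outer.
\item Set $u=B\,\operatorname{adj}(A)$. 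Each entry of $u$ is a finite sum of products of $H^\infty$ functions, so $u\in H^\infty_{\mathcal L(\mathbb C^n,\mathbb C)}$.
\item With $v=\det(A)\in H^\infty$ outer, we obtain $BA^{-1}=u/v\in N^+(\mathcal L(\mathbb C^n,\mathbb C))$.
\end{enumerate}

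The only point needing any care is the passage from the boundary inequality $|a|\Le1$, respectively $A^*A\Le I$, to boundedness on $\mathbb D$. For an outer function this is standard: an $H^2$ (or even $N^+$) function with bounded boundary values lies in $H^\infty$. Since the paper already uses this for $A$ in Lemma~\ref{domain-toeplitz}, I would simply cite it. Apart from that step, the argument is pure bookkeeping.
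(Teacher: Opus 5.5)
Your proposal is correct and follows essentially the same route as the paper. You write $B/a=u/v$ with $u=B$, $v=a$, and $BA^{-1}=B\operatorname{adj}(A)/\det(A)$, with $\det(A)$ outer by Helson and $\operatorname{adj}(A)$ bounded because $A^*A\Le I$ on $\mathbb T$; you also make explicit that $a$ and $\det(A)$ lie in $H^\infty$, which the paper leaves implicit (for $a$ this is already part of the standing assumption $a\in\mathcal S(\mathbb C,\mathbb C)$).
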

\begin{proof} Since $B \in \mathcal S(\mathbb C^n, \mathbb C)$, by definition, $\frac{B}{a} \in N^+(\mathcal L(\mathbb C^n, \mathbb C))$. 
For the other function, note first that since $A$ is outer, by \cite[pg. 125]{Hel}, $\det(A)$ is outer.
Moreover, 
$A^{-1}$ can be written as $\frac{\operatorname{adj}(A)}{\det (A)}$. Since $A(\zeta)^*A(\zeta) \Le I~\mbox{a.e.}~ \zeta\in \mathbb T$, it follows that each entry of $\operatorname{adj(A)}$ is in $H^\infty$. Also, since each component of $B$ is in $H^\infty$,  each component of $B\operatorname{adj}(A)$ is in $H^\infty$, and hence 
$$
BA^{-1} = \frac{B\operatorname{adj}(A)}{\det (A)} \in N^+(\mathcal L(\mathbb C^n, \mathbb C)).
$$
This concludes the proof.
% For the other function, we use  note first that since $A$ is outer, by \cite[pg. 125]{Hel}, $\det(A)$ is outer.
% Moreover, 
% $A^{-1}$ can be written as $\frac{\operatorname{adj}(A)}{\det (A)}$, where $\operatorname{adj}(A):\mathbb D\to \mathcal L(\mathbb C^n, \mathbb C^n)$ is the adjugate of $A$. Since $A(\zeta)^*A(\zeta) \Le I~\mbox{a.e.}~ \zeta\in \mathbb T$, it follows that each entry of $\operatorname{adj(A)}$ is in $H^\infty$. Also, since each component of $B$ is in $H^\infty$,  each component of $B\operatorname{adj}(A)$ is in $H^\infty$. Hence, 
% $$
% BA^{-1} = \frac{B\operatorname{adj}(A)}{\det (A)} \in N^+(\mathcal L(\mathbb C^n, \mathbb C)).
% $$
% % Therefore, $A^{-1}h=\frac{1}{\det(A)}(\tilde{h}_j)_{j=1}^n$, where $\tilde{h}_j\in H^2$, $j=1,\ldots,n.$
% %  Clearly, . Further, it follows  from the proof of Lemma \ref{domain-toeplitz} that  $A^{-1}=\frac{\operatorname{adj}(A)}{\det (A)}$, where each entry of $\operatorname{adj(A)}$ is in $H^\infty$ and $\det(A)$ is outer.
% % Therefore, it follows that  
\end{proof}
We also need the Smirnov maximum principle (see, \cite[Section~4.7, Theorem~A(i)]{Ros-Rov}), which asserts that if $f \in N^+(\mathcal L(\mathbb C^n, \mathbb C))$, then
\beq\label{smirnov-max}
\text{$f \in L^p(\mathbb T, \mathcal L(\mathbb C^n, \mathbb C))   \iff f \in H^p_{\mathcal L(\mathbb C^n, \mathbb C)}$ for any $p \in (0, \infty]$}.
\eeq

The following lemma establishes a relation between the boundary values of both outer functions $a$ and $A$. 
\begin{lemma}
% \label{a&A relation}
    Let $B \in \mathcal S(\mathbb C^n, \mathbb C)$ be such that $\log(1-BB^*) \in L^1(\mathbb T).$ Let $a$ and $A$ be outer functions satisfying \eqref{scalar outer a} and \eqref{unique-A}, respectively. Then, the following identity holds. 
        \beq \label{a-andA relation}
B(\zeta)(A(\zeta)^*A(\zeta))^{-1}B(\zeta)^* = \frac{B(\zeta)B(\zeta)^*}{|a(\zeta)|^2} \quad \mbox{a.e.\,}\, \zeta \in \mathbb T.
\eeq
% Moreover, 
%     \begin{itemize}
%         \item[(i)]  $BA^{-1}\in H^2_{\mathcal L(\mathbb C^n, \mathbb C)}$ if and only if $\frac{B}{a}\in H^2_{\mathcal L(\mathbb C^n, \mathbb C)}$,
% \item[(ii)]  $BA^{-1}\in H^\infty_{\mathcal L(\mathbb C^n, \mathbb C)}$ if and only if $\frac{B}{a}\in H^\infty_{\mathcal L(\mathbb C^n, \mathbb C)}$.
%     \end{itemize}
\end{lemma}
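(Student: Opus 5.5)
Fix a point $\zeta\in\mathbb T$ outside the null set where \eqref{scalar outer a} or \eqref{unique-A} fails. Write $b=B(\zeta)^*\in\mathbb C^n$ (a column vector), so that $B(\zeta)=b^*$ and $|a(\zeta)|^2 = 1-\|b\|^2$. By \eqref{unique-A}, $A(\zeta)^*A(\zeta) = I - bb^*$. The plan is therefore a pointwise linear-algebra identity: show that for a vector $b$ with $\|b\|<1$,
\[
b^*(I-bb^*)^{-1}b = \frac{\|b\|^2}{1-\|b\|^2}.
\]

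Invertibility needs a little care. Since $\log(1-BB^*)\in L^1(\mathbb T)$, we have $1-\|B(\zeta)\|^2>0$ almost everywhere. Hence $\|b\|<1$ a.e., and $I-bb^*$ is positive definite with eigenvalues $1$ (on $b^\perp$) and $1-\|b\|^2$ (along $b$). This also shows that the left side of \eqref{a-andA relation} is well defined a.e.

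For the identity itself, use Sherman--Morrison:
\[
(I-bb^*)^{-1} = I + \frac{bb^*}{1-\|b\|^2}.
\]
Alternatively, observe directly that $(I-bb^*)b = (1-\|b\|^2)b$, so $(I-bb^*)^{-1}b = b/(1-\|b\|^2)$. Then
\[
b^*(I-bb^*)^{-1}b = \frac{\|b\|^2}{1-\|b\|^2} = \frac{B(\zeta)B(\zeta)^*}{|a(\zeta)|^2},
\]
which is \eqref{a-andA relation}.

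There is no real obstacle here. The only point needing attention is the a.e. invertibility of $A(\zeta)^*A(\zeta)$, equivalently $|a(\zeta)|>0$ a.e., and this follows from the integrability of $\log(1-BB^*)$.
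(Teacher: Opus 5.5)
Your proposal is correct and matches the paper's proof: both reduce the identity to a pointwise computation via the Sherman--Morrison formula $(I-B^*B)^{-1}=I+B^*(1-BB^*)^{-1}B$ combined with $|a|^2=1-BB^*$ a.e. on $\mathbb T$. Your observation that $\log(1-BB^*)\in L^1(\mathbb T)$ forces $BB^*<1$ a.e., so that $A(\zeta)^*A(\zeta)$ is invertible a.e., fills in a detail the paper leaves implicit.
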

\begin{proof}
 For a.e. $\zeta \in \mathbb T$, by \eqref{scalar outer a} and \eqref{unique-A}, 
\beqn
B(\zeta)(A(\zeta)^*A(\zeta))^{-1}B(\zeta)^* &=& B(\zeta)(I- B(\zeta)^*B(\zeta))^{-1}B(\zeta)^*\\ 
&=& B(\zeta)\Big(I+ B(\zeta)^*(1 - B(\zeta)B(\zeta)^*)^{-1}B(\zeta)\Big)B(\zeta)^*\\
&=&B(\zeta)B(\zeta)^* \big(1 + (1 - B(\zeta)B(\zeta)^*)^{-1} B(\zeta)B(\zeta)^*\big)\\
&=& B(\zeta)B(\zeta)^* \Big(1 + \frac{B(\zeta)B(\zeta)^*}{|a|^2}\Big)\\
&=& \frac{B(\zeta)B(\zeta)^*}{|a(\zeta)|^2},
\eeqn
proving \eqref{a-andA relation}. 
% (i): By Lemma~\ref{smirnov}, $BA^{-1} \in  N^+(\mathcal L(\mathbb C^n, \mathbb C))$, which combined with the Smirnov maximum principle (see, \eqref{smirnov-max}) implies that  $BA^{-1}\in H^2_{\mathcal L(\mathbb C^n, \mathbb C)}$ if and only if $BA^{-1}(BA^{-1})^* \in L^1(\mathbb T).$ By \eqref{a&A relation}, this is further equivalent to $\frac{BB^*}{|a|^2}\in L^1(\mathbb T).$ By Lemma~\ref{smirnov}, $\frac{B}{a} \in N^+(\mathcal L(\mathbb C^n, \mathbb C))$, so that by \eqref{smirnov-max}, the equivalence of part (i) follows.
% (ii)  
% This finishes the proof.
% Moreover, since $H^\infty_{\mathcal L(\mathbb C^n, \mathbb C)} \subseteq H^2_{\mathcal L(\mathbb C^n, \mathbb C)}$, by the part (i) above, it follows that $\frac{B}{a} \in H^2_{\mathcal L(\mathbb C^n, \mathbb C)}$. This together with \eqref{B and scalar a} concludes that $\frac{B}{a} \in H^\infty_{\mathcal L(\mathbb C^n, \mathbb C)}$. The converse part can be proved exactly in the same way as the forward part. 
% Then, $BA^{-1}\in $, which combined with the part (i) implies that . 
% \textcolor{red}{ The proof of part (ii) is similar} to that of part (i) and is thus omitted.
\end{proof}

\begin{proof}[Proof of Theorem~\ref{bounded-contained}]
   (i) $\implies$ (ii) By an application of the closed graph theorem, this implication is immediate.
   
   (ii) $\iff$ (iii) Let $BA^{-1}: \mathbb D \rar \mathcal L(\mathbb C^n, \mathbb C)$ admit the series expansion $\sum_{j=0}^\infty c_jz^j$. Then, by Corollary~\ref{monomial-formula-lemma-state}, we have
   $$\sup_{m \Ge 0} \|z^m\|^2_{H(B)} = 1+\sum_{j=0}^\infty \|c_j\|^2.$$ 
   This concludes that $\sup_{m \Ge 0} \|z^m\|_{H(B)} < \infty$  if and only if $BA^{-1} \in  H^2_{\mathcal L(\mathbb C^n, \mathbb C)}$.

(iii) $\iff$ (iv) By \eqref{a-andA relation}, observe  that $BA^{-1}\in L^2_{\mathcal L(\mathbb C^n, \mathbb C)}$ if and only if $\frac{B}{a}\in L^2_{\mathcal L(\mathbb C^n, \mathbb C)}$. Since $BA^{-1}, ~\frac{B}{a} \in N^+(\mathcal L(\mathbb C^n, \mathbb C))$ (see, Lemma~\ref{smirnov}), by \eqref{smirnov-max}, the desired equivalence is immediate. 

% By Lemma~\ref{a&A relation}(i), this equivalence  follows.
% $(iii) \implies (iv)$: Suppose that $BA^{-1} \in H^2_{\mathbb C^n}$. Then, $B(A^*A)^{-1}B^* = BA^{-1}(BA^{-1})^* \in L^1(\mathbb T)$. This together with Lemma \ref{a&A relation} yields $\frac{BB^*}{|a|^2}\in L^1(\mathbb T).$ Since each $\frac{b_j}{a}$, $j=1,\ldots,n$, is in the Smirnov class $N^+$, by Smirnov maximum principle it follows that $\frac{B}{a}\in H^2_{\mathbb C^n}.$
   
   (iv) $\iff$ (v) Note that for a.e. $\zeta \in \mathbb T$,
   \beq
   \label{an-imp-relation-local}
(1 - BB^*)^{-1}(\zeta) = \frac{1}{|a(\zeta)|^2} \overset{\eqref{scalar outer a}}= 1+\frac{B(\zeta)B(\zeta)^*}{|a(\zeta)|^2}.
   \eeq
  Since  $\frac{B}{a} \in \mathcal N^+(\mathcal L(\mathbb C^n, \mathbb C))$ (see, Lemma~\ref{smirnov}), it follows from \eqref{smirnov-max} that
   $\frac{B}{a} \in H^2_{\mathcal L(\mathbb C^n, \mathbb C)}$ if and only if $\frac{B(\zeta)B(\zeta)^*}{|a(\zeta)|^2}\in L^1(\mathbb T)$. Coupling this with \eqref{an-imp-relation-local} proves the required equivalence.  

   (v) $\implies$ (i) Assume that $(1 - BB^*)^{-1} \in L^{1}(\mathbb T)$. Since $(1 - BB^*)^{-1} = |a|^{-2}$ and $a$ is outer, by \eqref{smirnov-max}, $\frac{1}{a} \in H^2$. Let $h \in H^\infty$. Then, $\frac{h}{a} \in H^2$, which implies that
    \beqn
   h = a \times \frac{h}{a} \in \mathcal M(a).
    \eeqn
    Since $\mathcal M(a) \subseteq \mathcal M(\overline{a}) \subseteq H(B)$ (see, \cite[Lemma~2.5]{CGL}), we obtain that $h \in H(B)$. This completes the proof. 
\end{proof}

 The following result describes when a finite rank $H(B)$-space coincides with $H^2$ with equivalence of norms. 
 
\begin{proposition}
    Let  $B\in \mathcal S(\mathbb C^n, \mathbb C)$ be such that $\log(1-BB^*) \in L^1(\mathbb T).$ Let $a$ and $A$ be outer functions satisfying \eqref{scalar outer a} and \eqref{unique-A}, respectively. Then the following are equivalent:
    \begin{itemize}
        \item[(i)] $H(B)=H^2 (\mbox{with equivalence of norms}),$
        \item[(ii)] $\sup_{z\in \mathbb D}\|B(z)\|_{_{\mathcal L(\mathbb C^n, \mathbb C)}} < 1,$ \vspace{0.8mm}
        \item[(iii)] $BA^{-1}\in H^\infty_{\mathcal L(\mathbb C^n, \mathbb C)}$,
        \item[(iv)] $\frac{B}{a}\in H^\infty_{\mathcal L(\mathbb C^n, \mathbb C)}.$ 
    \end{itemize}
\end{proposition}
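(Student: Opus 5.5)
The plan is to prove (ii) $\Leftrightarrow$ (iv) $\Leftrightarrow$ (iii) with boundary computations and the Smirnov maximum principle, and then to connect (i) with (ii) by an operator-theoretic argument.

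\emph{(iv) $\Leftrightarrow$ (iii).} By \eqref{a-andA relation}, on $\mathbb T$ we have
$$\|BA^{-1}\|^2=\frac{BB^*}{|a|^2}=\Big\|\frac{B}{a}\Big\|^2 \quad\text{a.e.}$$
Hence $BA^{-1}\in L^\infty$ if and only if $B/a\in L^\infty$. Both functions lie in $N^+$ by Lemma~\ref{smirnov}, so \eqref{smirnov-max} with $p=\infty$ gives the equivalence.

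\emph{(ii) $\Leftrightarrow$ (iv).} Suppose $\sup_{\mathbb D}\|B\|\le r<1$. Then $|a|^2=1-BB^*\ge 1-r^2$ a.e. on $\mathbb T$, so $B/a\in L^\infty$, and Smirnov gives $B/a\in H^\infty$. Conversely, if $\|B/a\|\le M$ a.e. on $\mathbb T$, then \eqref{an-imp-relation-local} gives $1/|a|^2\le 1+M^2$, i.e. $BB^*\le 1-(1+M^2)^{-1}$ a.e. on $\mathbb T$. Each $b_j\in H^\infty$, so $z\mapsto\|B(z)\|^2$ is subharmonic (a sum of $|b_j|^2$) and is dominated by the Poisson integral of its boundary values. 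This yields $\sup_{\mathbb D}\|B\|^2\le 1-(1+M^2)^{-1}<1$.

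\emph{(ii) $\Rightarrow$ (i).} Under (ii), $\|T_B\|\le \|B\|_\infty=r<1$, so $I-T_BT_B^*\ge (1-r^2)I$ is invertible on $H^2$. Then $H(B)=\operatorname{ran}(I-T_BT_B^*)^{1/2}=H^2$. The range norm is $\|(I-T_BT_B^*)^{-1/2}f\|$, which lies between $\|f\|$ and $(1-r^2)^{-1/2}\|f\|$.

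\emph{(i) $\Rightarrow$ (ii).} This is the step I expect to be the main obstacle. The plan is to bound the kernels. If $H(B)=H^2$ with $\|f\|_{H(B)}\le C\|f\|_{H^2}$, then the identity map $H^2\to H(B)$ is bounded. Evaluating the reproducing kernels gives the kernel inequality $K_B\ge C^{-2}K_{\bf 0}$, i.e.
$$\frac{1-B(z)B(\lambda)^*}{1-z\bar\lambda}-\frac{C^{-2}}{1-z\bar\lambda}\succeq 0.$$
On the diagonal $z=\lambda$ this reads $1-\|B(\lambda)\|^2\ge C^{-2}$, which gives (ii).

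The logical order is therefore (i) $\Rightarrow$ (ii) by the kernel comparison, (ii) $\Rightarrow$ (i) by the range-space argument, and (ii) $\Leftrightarrow$ (iv) $\Leftrightarrow$ (iii) by the boundary computations. In the kernel step, the delicate point is to get the direction of the inequality right. Boundedness of the inclusion $H^2\hookrightarrow H(B)$ means that $C^2K_B-K_{\bf 0}$ is a positive kernel, and in fact one needs $C^2K_B- K_{\bf 0}\succeq 0$ ... I would recheck this. The correct use is the following: $H(B)\subseteq H^2$ contractively always holds, and equality of the spaces with $\|f\|_{H(B)}\le C\|f\|_{H^2}$ is equivalent to $K_{\bf 0}\le C^2 K_B$. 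On the diagonal this gives $\frac{1}{1-|\lambda|^2}\le C^2\frac{1-\|B(\lambda)\|^2}{1-|\lambda|^2}$, which is exactly (ii).
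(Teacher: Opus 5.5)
Your proof is correct. For the equivalences among (ii), (iii), (iv) you follow the paper's argument: the a.e.\ boundary identity $\|BA^{-1}\|=\|B/a\|$, the relation $(1-BB^*)^{-1}=1+BB^*/|a|^2$, and the Smirnov maximum principle with $p=\infty$. Going (ii)$\Rightarrow$(iv) instead of (ii)$\Rightarrow$(iii) makes no difference. Your subharmonicity remark supplies the passage from the a.e.\ bound $BB^*\le M^2/(1+M^2)$ on $\mathbb T$ to $\sup_{\mathbb D}\|B\|<1$, which the paper asserts without comment.

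The genuine difference is (i)$\Leftrightarrow$(ii). The paper gives no argument and only says it goes as in the scalar case (Fricain--Mashreghi, Corollary 18.6). You prove it directly: (ii)$\Rightarrow$(i) from $(1-r^2)I\le I-T_BT_B^*\le I$ together with the range-space realization of $H(B)$, and (i)$\Rightarrow$(ii) by kernel comparison. This makes the proposition self-contained and shows that this step does not depend on $n$. It also yields explicit constants, namely $\|f\|_{H^2}\le\|f\|_{H(B)}\le(1-r^2)^{-1/2}\|f\|_{H^2}$ and $\sup_{\mathbb D}\|B\|^2\le 1-C^{-2}$. The paper's citation is shorter but leaves the reader to check that the scalar proof carries over.

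Your worry about the direction of the kernel inequality is unfounded. The two forms you wrote, $K_B\ge C^{-2}K_{\bf 0}$ and $K_{\bf 0}\le C^2K_B$, are the same statement, and it is the correct one. You only need its diagonal, which follows in one line: apply $|f(\lambda)|^2\le\|f\|_{H(B)}^2K_B(\lambda,\lambda)\le C^2\|f\|_{H^2}^2K_B(\lambda,\lambda)$ to $f=\kappa_\lambda$. Write it that way and drop the self-correction.
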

\begin{proof}
(i) $\iff$ (ii) This can be proved in the same way as in the classical case; refer to \cite[Corollary 18.6]{FM-2}.

(ii) $\implies$ (iii) Suppose (ii) holds. Then, by \eqref{scalar outer a}, there exists $c > 0$ such that $|a(\zeta)|\Ge c$ a.e. $\zeta\in\mathbb T$. By Lemma \ref{a-andA relation}, it now follows that $BA^{-1}\in L^{\infty}(\mathbb C^n, \mathbb C)$. Since $BA^{-1} \in \mathcal N^+(\mathcal L(\mathbb C^n, \mathbb C))$ (see, Lemma~\ref{smirnov}), it follows from \eqref{smirnov-max} that $BA^{-1}\in H^\infty_{\mathcal L(\mathbb C^n, \mathbb C)}$.

(iii) $\iff$ (iv) By \eqref{a-andA relation}, it follows that $BA^{-1}\in L^\infty_{\mathcal L(\mathbb C^n, \mathbb C)}$ if and only if $\frac{B}{a}\in L^\infty_{\mathcal L(\mathbb C^n, \mathbb C)}$. Since $BA^{-1}, ~\frac{B}{a} \in N^+(\mathcal L(\mathbb C^n, \mathbb C))$, by \eqref{smirnov-max}, the desired equivalence follows.

(iv) $\implies$ (ii) Assume that (iv) holds. Then, there exists $c>0$ such that $\frac{B(\zeta)B(\zeta)^*}{|a(\zeta)|^2} \Le c$ a.e. $\zeta\in\mathbb T$. This together with the relation 
$$
(1 - BB^*)^{-1} = 1+\frac{BB^*}{|a|^2} \quad \text{a.e. on $\mathbb T$}
$$
 concludes that $$B(\zeta)B(\zeta)^*\Le \frac{c}{1+c}<1 \quad \mbox{a.e.}~ \zeta\in\mathbb T.$$ Hence, $\sup_{z\in\mathbb D}\|B(z)\|_{_{\mathcal L(\mathbb C^n, \mathbb C)}}<1.$ This completes the proof.
\end{proof}

\textbf{Concluding remarks.}
Under the Szeg\H{o}-type condition \( \log(1 - BB^*) \in L^1(\mathbb T) \), the space \( H(B) \) admits a concrete realization as the domain of the adjoint of a Toeplitz operator with symbol \( \varphi = BA^{-1} \), where \( A \) is the associated outer function. In this formulation, the norm on \( H(B) \) coincides with the graph norm of \( T_\varphi^* \), and the action of \( T_\varphi^* \) admits an explicit coefficient description as a discrete convolution operator,
\[
k \mapsto \sum_{j = 0}^\infty c_j^* \hat f(j+k).
\]
This yields a concrete sequence-space model for \( H(B) \), making both the operator and norm structure fully explicit. In the classical $H(b)$ spaces (rank-one) setting, related norm representations are known (see, \cite{CGR}), and the present formulation extends such descriptions to the finite rank vector-valued case and identifies explicitly the associated operator acting on coefficients.

The characterization of the inclusion \( H^\infty \subseteq H(B) \) further identifies the precise threshold for the size of the space, showing that it is equivalent to the conditions \( BA^{-1} \in H^2_{\mathcal L(\mathbb C^n, \mathbb C)} \) and \( (1 - BB^*)^{-1} \in L^1(\mathbb T) \).  It should be emphasized, however, that the arguments rely essentially on the  \emph{finite rank hypothesis} $B \in \mathcal S(\mathbb C^n, \mathbb C).$ In the absence of this finite dimensional structure, substantial obstructions arise, notably in the existence of inverse  of an operator-valued outer function. Extending the present framework to infinite dimensional settings therefore remains a challenging problem, likely requiring new methods beyond those employed here, and constitutes a natural direction for further investigation.

% \begin{proposition}
%     Let $\varphi\in N^+( \mathcal L(\mathbb C^n, \mathbb C))$. Then there exists a $B\in S(C^n,C)$ such that $\varphi=BA^{-1}$, where $A:\mathbb D\to \mathcal L(\mathbb C^n, \mathbb C^n)$ is an outer function satisfying \eqref{}.
% \end{proposition}
% \begin{proof}
%     Let $\varphi=\frac{u}{v}$, where $u\in H^\infty(\mathcal L(\mathbb C^n, \mathbb C))$ and $v\in H^\infty$ is outer. Define $h:\mathbb D\to \mathcal L(\mathbb C^n, \mathbb C)$ by  $h(z)=I+\frac{u(z)^*u(z)}{|v(z)|^2}$, $z\in\mathbb D.$ Since $v\in H^\infty$ is outer, $\log|v|\in L^1(\mathbb T)$. This together with the fact that  $u\in H^\infty(\mathcal L(\mathbb C^n, \mathbb C))$, implies $\log(1+\frac{\|u\|^2}{|v|^2})\in L^1(\mathbb T).$ Hence
%     \begin{equation}
%         1\leqslant {\|h(z) \|}\leqslant  (1+\frac{\|u\|^2}{|v|^2}).
%     \end{equation}
%     This yields $\log^+ \|h(z)\|\in L^1(\mathbb T).$ Moreover, it is also easy to see that $\log^+ \|h(z)^{-1}\|=0.$ Hence there exists an outer function $C:\mathbb D\to \mathcal L(\mathbb C^n,\mathbb C^n)$ such that $h=C^*C$ on $\mathbb T.$ Taking $B=\frac{uA}{v}$, we obtain
%     \begin{equation}
%         A^*A+B^*B=h^{-1}+\frac{A^*u^*uA}{|v|^2}=A^*(I+\frac{u^*u}{|v|^2})A=
%     \end{equation}
% \end{proof}
\vspace{0.1in}

\noindent\textbf{Acknowledgement:}  The work of the first named  is supported by  ARG-MATRICS grant by the ANRF (File No: ANRF/ARGM/2025/001583/MTR) and INSPIRE Faculty Fellowship (DST/INSPIRE/04/2021/002555). The third named author acknowledges the support received from the Indian Institute of Technology Bombay through an Institute postdoctoral fellowship.

\vspace{0.1in}
\noindent\textbf{Data Availability:}
Data sharing is not applicable to this article, as no datasets were generated or analyzed during
the current study. In case any datasets are generated during and/or analyzed during the current study, they must
be available from the corresponding author on reasonable request.

\vspace{0.1in}
\noindent\textbf{Conflict of interest:} There is no conflict of interest.

\end{document}